\documentclass[hidelinks,onefignum,onetabnum]{siamart251216}

\usepackage{lipsum}
\usepackage{amsfonts}
\usepackage{graphicx}
\usepackage{epstopdf}
\usepackage{algorithmic}
\ifpdf
  \DeclareGraphicsExtensions{.eps,.pdf,.png,.jpg}
\else
  \DeclareGraphicsExtensions{.eps}
\fi

\newsiamremark{remark}{Remark}
\newsiamremark{hypothesis}{Hypothesis}
\crefname{hypothesis}{Hypothesis}{Hypotheses}
\newsiamthm{claim}{Claim}
\newsiamremark{fact}{Fact}
\crefname{fact}{Fact}{Facts}

\title{Dynamic subscales and unconditional semi-discrete stability for non-residual VMS approximations of generalised Newtonian flows\thanks{Corresponding author: \texttt{ernesto.castillode@usach.cl}}}

\author{Gabriel R. Barrenechea\thanks{Department of Mathematics and Statistics University of Strathclyde, 26 Richmond Street, Glasgow G1 1XH, United Kingdom
		(\texttt{gabriel.barrenechea@strath.ac.uk}).}
	\and Ernesto Castillo\thanks{Department of Mechanical Engineering; Computational Heat and Fluid Flow Laboratory, Universidad de Santiago de Chile, Av. Libertador Bernardo O'Higgins 3363, Santiago 9170022, Chile
		(\texttt{ernesto.castillode@usach.cl}).}
	\and Ramon Codina\thanks{Department of Civil and Environmental Engineering Universitat Politècnica de Catalunya, Jordi Girona 1--3, Barcelona 08034, Spain
		(\texttt{ramon.codina@upc.edu}).}
	\and \mbox{Jorge Pastene}\thanks{Department of Mathematics and Computer Science; Computational Heat and Fluid Flow Laboratory, Universidad de Santiago de Chile, Av. Libertador Bernardo O'Higgins 3363, Santiago 9170022, Chile
		(\texttt{jorge.pastene.g@usach.cl}).}
	\and Patrick Vega\thanks{Department of Mathematics and Computer Science; Computational Heat and Fluid Flow Laboratory, Universidad de Santiago de Chile, Av. Libertador Bernardo O'Higgins 3363, Santiago 9170022, Chile
		(\texttt{patrick.vega@usach.cl}).}}

\usepackage{amsopn}

\newtheorem{assumption}[theorem]{Assumption}

\usepackage{amssymb}
\usepackage{mathrsfs}

\newcommand{\triple}[1]{\left|\!\left|\!\left|{#1}\right|\!\right|\!\right|}
\newcommand{\double}[1]{\left|\!\left|{#1}\right|\!\right|}

\newcommand{\grad}{\boldsymbol \nabla}
\renewcommand{\div}{\textrm{div}\,}

\newcommand{\BU}{\boldsymbol U}
\newcommand{\BV}{\boldsymbol V}

\newcommand{\bff}{\boldsymbol f}

\newcommand{\bu}{\boldsymbol u}
\newcommand{\bv}{\boldsymbol v}
\newcommand{\bw}{\boldsymbol w}
\newcommand{\bx}{\boldsymbol x}

\newcommand{\bz}{\boldsymbol z}

\newcommand{\LB}{\mathscr B}

\newcommand{\LP}{\mathscr P}

\newcommand{\LU}{\mathscr U}
\newcommand{\LV}{\mathscr V}
\newcommand{\LW}{\mathscr W}
\newcommand{\LX}{\mathscr X}

\def\ddx{{\,\rm d}x}
\def\dds{{\,\rm d}s}
\def\Sh{S_h}

\ifpdf
\hypersetup{
  pdftitle={},
  pdfauthor={G. R. Barrenechea, E. Castullo, R. Codina, J. Pastene, and P. Vega}
}
\fi

\usepackage{pdflscape}
\usepackage{amsmath}
\usepackage{multirow}
\usepackage{booktabs}

\begin{document}

\maketitle

\begin{abstract}
We analyse a non-residual variational multiscale finite element formulation with dynamic subscales for incompressible generalised Newtonian Navier--Stokes flows. The apparent viscosity is assumed to be bounded and Lipschitz continuous with respect to the shear rate, covering several regularised rheological laws. The method separates, through orthogonal projections, the unresolved pressure-divergence, convective, and pressure-gradient contributions. For the linearised semi-discrete problem, we prove well-posedness and an unconditional stability estimate in an anisotropic VMS norm. The estimate controls viscous dissipation, subscale energies, discrete divergence, and a coupled acceleration-convection-pressure balance. For the non-linear formulation, a fixed-point argument yields existence and optimal-order a priori bounds under suitable regularity and smallness assumptions. The analysis also provides weak-in-time control of the discrete pressure-convection balance. A key point is that the dynamic pressure-related subscale provides the time-derivative contribution needed to control the projected pressure-gradient error, a mechanism unavailable in the corresponding quasi-static setting.
\end{abstract}

\begin{keywords}
Generalised Newtonian flows, Variational multiscale methods, Dynamic subscales, Stabilised finite elements, Semi-discrete stability
\end{keywords}


\section{Introduction}

The numerical approximation of incompressible flows with non-linear viscous response remains a challenging problem in the analysis of finite element methods for fluid mechanics. Generalised Newtonian models extend the classical Navier--Stokes equations by introducing an apparent viscosity depending on the local shear rate. This leads to a system in which non-linear diffusion, incompressibility and convective transport are coupled. Such models arise in biofluid mechanics \cite{GaldiRannacherRobertsonTurek2008}, polymeric liquids \cite{BirdArmstrongHassager1987}, suspensions and other applications involving complex fluids \cite{Larson1999}, but they also introduce analytical difficulties related to the non-linear viscous operator, pressure control, convective stability and the coupling between resolved and unresolved scales.

The mathematical and numerical analysis of generalised Newtonian and implicitly constituted incompressible fluids has developed substantially in recent years. For steady flows, finite element convergence has been studied for implicit power-law-like rheologies using compactness and Lipschitz truncation techniques \cite{DieningKreuzerSuli2013}, and stabilised finite element methods have been analysed for power-law fluids \cite{BarrenecheaSuli2023}. Iterative linearisation strategies have also been investigated, including convergence-rate results for Ka\v{c}anov-type schemes applied to shear-thinning models \cite{PascalSuli2022}. For unsteady problems, fully discrete mixed finite element approximations have been developed for implicitly constituted incompressible fluids \cite{SuliTabea2019}, while mixed three-field formulations have been analysed for implicit constitutive models in which the stress is treated as an additional unknown \cite{Farrell2020}. 
Optimal error estimates for space-time discretisations of incompressible generalised Newtonian fluids have also been obtained under suitable structural assumptions on the extra stress tensor \cite{BerselliRusicka2021}. These works provide a rigorous analytical foundation for non-linear rheological models. 

Beyond this algebraic framework of shear-dependent viscosity laws, related non-residual VMS methodology has also been developed for viscoelastic fluids, a distinct class of non-Newtonian models in which the extra stress obeys its own evolution equation rather than an algebraic shear-dependent law. Stabilised finite element formulations for the three-field incompressible viscoelastic flow problem were proposed in \cite{Castillo2014}. Their analysis was subsequently carried out for a time-dependent semi-discrete scheme in \cite{BarrenecheaCastillo2019}, while the steady-state and linearised Navier--Stokes/Oldroyd-B problem was analysed in \cite{Castillo2017}.

Returning to the generalised Newtonian setting, the present work focuses on flows with bounded shear-dependent apparent viscosity. This class includes Carreau, Carreau--Yasuda, Cross, Quemada and Eyring-type laws \cite{Carreau1972,CarreauYasuda,Cross1965,Quemada1977,Eyring1936}, for which the viscosity is regularised between two strictly positive limiting values. From the analytical viewpoint, these assumptions provide a uniformly elliptic viscous contribution and a Lipschitz dependence on the shear rate, which are used to control the non-linear viscosity terms in the stability and error estimates. From the modelling viewpoint, they retain the structure of physically motivated apparent-viscosity laws, rather than replacing the rheology by an abstract non-linear diffusion operator. Thus, the analysis is developed for a mathematically tractable class of models while remaining connected with rheological laws used in applications.

These works, however, do not address the stability mechanism generated by dynamic non-residual variational multiscale subscales. A second difficulty is associated with the stabilisation of incompressible flows in convection-dominated regimes. Inf-sup stability controls the pressure-velocity coupling, but it does not by itself provide sufficient control of unresolved velocity scales generated by convection. Stabilised finite element methods address this issue by incorporating the effect of subgrid scales on the resolved finite element solution. This idea is rooted in the variational multiscale framework \cite{Hughes1995,Hughes1998} and has led to residual-based, SUPG-type, orthogonal subscale and projection-based stabilisation methods \cite{BrooksHughes1982,Codina1997,Codina2008,Badia2010}. In convection-dominated and under-resolved regimes, stabilisation provides a discrete mechanism to control unresolved convective activity and its transfer to the resolved finite element scales. {In the analysis, however, this mechanism is clearly identified for the linearised problem, but estimates for the fully non-linear problem are only meaningful in viscous-dominated flows. Nevertheless,} this interpretation has been explored in orthogonal subscale formulations for large-eddy simulation \cite{GuaschCodina2013} and is consistent with recent discussions on dissipation-aware finite element methods for turbulent flows~\cite{FehnKronbichlerLube2025}. 

Dynamic subscales, introduced in \cite{codina9}, are particularly relevant in transient problems. In fact, for the transient Stokes problem, using standard stabilised methods, or quasi-static subscales, stability could only be proven under the `reverse' CFL condition $\Delta t\ge Ch^2$, where $\Delta t$ is the time step, and $h$ the mesh size (see, e.g., \cite{BOCHEV20041471,BARRENECHEA2007219,codinap1} for discussions of this topic). This restriction leads to a loss of robustness under anisotropic time-space discretisations.
This was one motivating factor to retain the subscale time derivative, as it restores stability and yields estimates that are robust with respect to such anisotropy \cite{BadiaCodina2009}. Building on this insight, a dynamic, term-by-term VMS formulation was later developed for the incompressible Navier--Stokes equations, using orthogonal subgrid scales and separating the convective and pressure-gradient contributions at the subscale level \cite{Codina2008, cowenc2, CastilloCodina2019}. This non-residual viewpoint has also been used in semi-discrete analyses of other time-dependent fluid models \cite{BarrenecheaCastillo2019}. More recently, dynamic non-residual VMS formulations have been assessed computationally for generalised Newtonian flows with high-order spatial and temporal approximations \cite{GuerreroCastillo2025}, and term-by-term dynamic subscales have been used in large-scale incompressible turbulent aerodynamics to obtain robust under-resolved simulations without introducing a problem-specific turbulence model \cite{Escobar2026}.

These developments motivate the present analysis, but they also make  the remaining gap clear. What is missing is not a stabilised approximation of generalised Newtonian flows, nor a dynamic VMS formulation for Newtonian Navier--Stokes flows. The missing point is a semi-discrete stability and error analysis showing how a dynamic non-residual separation of the unresolved momentum mechanisms controls the pressure-gradient contribution in the non-linear shear-dependent case. 

The method analysed in this paper is a non-residual variational multiscale formulation with orthogonal projections and dynamic subscales. The unresolved contributions are introduced term by term, rather than through a single momentum residual. This separation makes it possible to distinguish the pressure-divergence, convective and pressure-gradient mechanisms in the stability analysis. The formulation contains two dynamic velocity subscales, associated respectively with the convective contribution and the pressure-gradient contribution, together with a pressure stabilisation acting on the discrete divergence. The stabilisation parameters depend on the mesh size, a characteristic viscosity and the resolved convective field.

The main contribution of this paper is an unconditional semi-discrete 
stability analysis for this non-residual VMS formulation applied to a non-linear class of generalised Newtonian Navier--Stokes flows. {By `unconditional' in the semi-discrete setting we mean that there is no limitation on the time interval of analysis, contrary to what was found in \cite{BarrenecheaCastillo2019} using quasi-static subscales.} For a linearised problem, we first prove well-posedness and derive an energy estimate for the resolved velocity and the dynamic subscales. We then establish stability in an anisotropic VMS norm that controls the viscous dissipation, the subscale energies, the discrete divergence and a coupled acceleration-convection-pressure balance. The use of dynamic subscales allows us to analyse the semi-discrete method (i.e., before time discretisation), which would not be possible had quasi-static subscales been used.
For the non-linear problem, we use a fixed-point argument and obtain optimal-order a priori bounds under suitable regularity and smallness assumptions. A key point in the proof is the role of the pressure-related dynamic subscale: its time derivative provides the additional control needed to absorb the projected pressure-gradient error, a mechanism that is absent in quasi-static subscale formulations and would otherwise require an additional large-time assumption, as was required in \cite{BarrenecheaCastillo2019}. We also prove an additional weak-in-time estimate for the discrete pressure-convection balance, {but it also provides spatial control on both the pressure gradient and the convective term. This estimate is only meaningful for mildly non-linear viscosities, but in particular applies to Newtonian flows, a setting in which it is also a new result.}

The paper is organised as follows. Section~\ref{sec:notation_preliminaries} introduces the notation, functional setting, finite element spaces and projection tools used in the analysis. Section~\ref{sec:problem} presents the generalised Newtonian model, its weak formulation and the Galerkin finite element approximation. Section~\ref{sec:vms} defines the non-residual VMS formulation with dynamic subscales and the corresponding stabilisation parameters. Section~\ref{sec:analysis} proves well-posedness and unconditional semi-discrete stability for the linearised problem, and develops the fixed-point argument for the non-linear formulation. Section~\ref{sec:pressure_convection_estimate} establishes an additional weak-in-time bound for the discrete pressure-gradient and convective contributions. Section~\ref{sec:conclusions} gives the conclusions.

\section{Notation and preliminaries}\label{sec:notation_preliminaries}

\subsection{Notation}\label{subsec:notation}

We introduce the notation used throughout the paper. 
We adopt a standard notation for Lebesgue and Sobolev spaces. In particular, for $\omega\subset\mathbb{R}^d $ with $d=2,3$, $m\in\mathbb{N}$ and $p\ge 1$, $W^{m,p}(\omega)$ denotes the Sobolev space of functions whose distributional derivatives up to order $m$ belong to $L^p(\omega)$, endowed with its standard norm, hereafter denoted $\double{\cdot}_{m,p,\omega}$. 
For $p=2$, we write $W^{m,2}(\omega)=H^m(\omega)$. The space $H^1_0(\omega)$ consists of functions in $H^1(\omega)$ with vanishing trace on $\partial\omega$. The topological dual of $H^1_0(\omega)$ is denoted by $H^{-1}(\omega)$, and the corresponding duality pairing by $\langle\cdot,\cdot\rangle_\omega$. The $L^2(\omega)$ inner product, for scalar, vector or tensor fields, is denoted by $(\cdot,\cdot)_\omega$. 

The following notation is used for norms over the computational domain $\Omega$: $\double{\cdot}$ denotes the $L^2(\Omega)$-norm, $\double{\cdot}_{L^4}$ the $L^4(\Omega)$-norm, $\double{\cdot}_{\infty}$ the $L^\infty(\Omega)$-norm, $\double{\cdot}_{k}$ the $H^k(\Omega)$-norm, and $\double{\cdot}_{\ell,p}$ the $W^{\ell,p}(\Omega)$-norm. The corresponding semi-norm in $W^{\ell,p}(\Omega)$ is denoted by $|\cdot|_{\ell,p}$. The inner product in $L^2(\Omega)$ will be denoted by $(\cdot,\cdot)$.

The symbol $\lesssim$ denotes an inequality up to a positive constant independent of the mesh size and of the physical parameters. Any relevant dependence on physical constants will be stated explicitly. For a vector field $\bv$, its symmetric gradient is denoted by $\grad^s\bv$ and defined as
\[
\grad^s\bv=\frac{1}{2}\left(\grad\bv+(\grad\bv)^T\right).
\]
The time derivative is denoted by $\partial_t$.

\subsection{Preliminaries}\label{subsec:preliminaries}

We first recall some standard inequalities. For every $\bv\in H^1_0(\Omega)^d$, the full gradient can be bounded in terms of the symmetric gradient as
\begin{equation}
	\double{\grad\bv}\le\sqrt{2}\double{\grad^s\bv}.
	\label{eq:korn_simple}
\end{equation}
Moreover, the Poincaré-Friedrichs and Korn inequalities imply that there exist positive constants $c_{\Omega}$ and $c_K$, depending only on $\Omega$, such that
\begin{equation}
	\double{\bv}\le c_{\Omega}\double{\grad\bv},
	\qquad
	\double{\bv}_{1}\le c_K\double{\grad^s\bv}
	\qquad \forall \bv\in H^1_0(\Omega)^d .
	\label{eq:poincare_korn}
\end{equation}
For a proof of the Poincar\'e inequality see, e.g., \cite[Theorem~8.3-3]{Ciarlet-2025}, and for a proof of
Korn's inequality, see \cite[Theorem~8.17-4]{Ciarlet-2025}.
We shall also use the Sobolev embedding (see, e.g., \cite[Theorem~8.4-1]{Ciarlet-2025})
\begin{equation}
	\double{\bv}_{L^4}\le C_{\rm emb} \double{\bv}_{1}
	\qquad \forall \bv\in H^1_0(\Omega)^d ,
	\label{eq:embedding_L4_H1}
\end{equation}
where the constant $C_{\rm emb}$ depends only on $\Omega$; see, for instance, \cite[Theorem~8.4-1]{Ciarlet-2025}. 

Since the discrete velocity field is not pointwise divergence-free in general, we use the skew-symmetric form of the convective term. For $\bu,\bv,\bw\in H^1_0(\Omega)^d$, we define
\[
\overline{c}(\bu,\bv,\bw)=\int_\Omega\mathcal{N}(\bu,\bv)\cdot\bw\ddx,
\qquad
\mathcal{N}(\bu,\bv)=\bu\cdot \grad \bv+\frac{1}{2}(\div\bu)\bv .
\]
This form satisfies the following properties (see \cite[Chapter~3]{BGHRR-2024}):
\begin{enumerate}
	\item $\overline{c}(\bu,\bv,\bv)=0$ for every $\bu,\bv\in H^1_0(\Omega)^d$.
	\item There holds
	\begin{equation}
		|\overline{c}(\bu,\bv,\bw)|
		\le \left(C_{\rm emb}\right)^2
		\double{\bu}_{1}\double{\bv}_{1}\double{\bw}_{1}
		\qquad
		\forall \bu,\bv,\bw\in H^1_0(\Omega)^d .
		\label{eq:convective_bound_H1}
	\end{equation}
	\item If $\bu\in W^{1,\infty}(\Omega)^d$, $\bv\in H^1_0(\Omega)^d$ and $\bw\in L^2(\Omega)^d$, then
	\begin{equation}
		|\overline{c}(\bu,\bv,\bw)|
		\le
		\double{\bw}
		\left(
		\double{\bu}_{\infty}\double{\grad\bv}
		+
		\frac{1}{2}\double{\div\bu}_{\infty}\double{\bv}
		\right).
		\label{eq:convective_bound_Linfty}
	\end{equation}
\end{enumerate}

Let $\mathcal{T}_h=\{K\}$ be a finite element partition of $\Omega$. The diameter of an element $K\in\mathcal{T}_h$ is denoted by $h_K$, and the mesh size is $h=\max\{h_K:K\in\mathcal{T}_h\}$. We consider quasi-uniform families of meshes, so that all element diameters are bounded above and below by constants proportional to $h$.

Let, for $k\ge 1$, $\Sh$ be the space given by
\begin{equation}\label{FE-space-generic}
	\Sh:=\{\bv_h\in C(\overline{\Omega}):\ \bv_h|_K\in P_k(K)\quad \forall K\in\mathcal{T}_h\},
\end{equation}
where $P_k(K)$ denotes the space of polynomials of degree at most $k$ on $K$. The polynomial order $k$ will be considered fixed, i.e., we only consider convergence under $h$-refinement.  We shall use the following inverse inequality: there exists a constant $c_{\rm inv}$, independent of $h$, such that
\begin{equation}
	\double{\bv_h}_{\ell,p,K}
	\le
	c_{\rm inv}
	h_K^{m-\ell+d\left(\frac{1}{p}-\frac{1}{q}\right)}
	\double{\bv_h}_{m,q,K},
	\label{eq:inverse_inequality}
\end{equation}
for all $\ell\ge m$, $p,q\in[1,+\infty]$, all $K\in\mathcal{T}_h$, and all finite element functions $\bv_h$ defined on $K$; see, for instance, \cite[Lemma~12.1]{EG21-I}. 

We also use standard approximation estimates for the $L^2$-projection. If $\bv\in W^{\ell,p}(\Omega)$ and $P[\bv]\in \Sh$ denotes its $L^2$-projection onto $\Sh$, then there exists a constant $C$, independent of $h$, such that
\begin{equation}
	\double{\bv-P[\bv]}_{p}
	\le
	C h^\ell |\bv|_{\ell,p},
	\qquad
	0\le \ell\le k+1,\quad 1\le p\le \infty .
	\label{eq:interpolation_estimate}
\end{equation}
Further estimates of this type can be found in \cite[Chapter~11]{EG21-I}. In particular, using the $W^{1,\infty}(\Omega)$-stability of the $L^2$-projection on finite element spaces (\cite{Crouzeix1987} for $d=2$, and \cite{Diening2021} for $d=3$), we obtain the bound
\begin{equation}
	\double{\grad(\bv-P[\bv])}_{\infty}
	\le
	C\, \double{\bv}_{1,\infty},
	\label{eq:projection_W1infty}
\end{equation}
where $C$ is independent of $h$.

We shall use $P_u$ to denote the $L^2(\Omega)$-orthogonal projection onto the finite element space $\Sh^d$, where $\Sh$ is defined in \eqref{FE-space-generic} (which is related to the velocity discretisation),   
and $P_u^\perp := I-P_u$. Analogously, $P_p$ denotes the $L^2(\Omega)$-orthogonal projection onto $\Sh$ (which is the space that will be used to discretise the pressure), and
$P_p^\perp := I-P_p$.

\section{Continuous problem and finite element formulation}\label{sec:problem}

\subsection{Governing equations}\label{subsec:governing_equations}

Let $\Omega\subset\mathbb{R}^d$, $d=2,3$, be a bounded polyhedral domain occupied by the fluid, with boundary $\partial\Omega$. Let $(0,T)$ be the time interval of interest, with $0<T\le\infty$. We consider an incompressible, isothermal generalised Newtonian flow governed by
\begin{align}
	\partial_t\bu+\bu\cdot\grad\bu-\div\left(2\mu\left(\cdot,|\grad^s\bu|\right)\grad^s\bu\right)+\grad p &= \bff
	&&\text{in }\Omega\times(0,T], \label{eq:strong_momentum} \\
	\div\bu &= 0
	&&\text{in }\Omega\times(0,T], \label{eq:strong_continuity}\\
	\bu &= 0
	&&\text{on }\partial\Omega\times(0,T], \label{eq:strong_boundary}\\
	\bu(0) &= \bu_0
	&&\text{in }\Omega. \label{eq:strong_initial}
\end{align}
Here $\bu$ is the velocity field, $p$ is the pressure, $\bff$ is a prescribed body force, and $|\grad^s\bu|$ denotes the Frobenius norm of the symmetric gradient. The function $\mu(\bx,|\grad^s\bu|)$ is the apparent viscosity. {Note that we have taken the density in the momentum equation as $\rho = 1$, so that $\mu$ has to be understood as a kinematic viscosity rather than the dynamic one.}

We assume that $\mu$ satisfies the following structural conditions.

\begin{assumption}\label{ass:viscosity_lipschitz}
	There exists a constant $C_\mu>0$ such that
	\begin{equation}
		|\mu(\bx,t)-\mu(\bx,s)|\le C_\mu |t-s|
		\qquad
		\forall t,s\in\mathbb{R}_{\ge0},\quad \bx\in\overline{\Omega}.
		\label{eq:mu_lipschitz}
	\end{equation}
\end{assumption}
\begin{assumption}\label{ass:viscosity_bound} There exist constants $\mu_0>\mu_\infty>0$ such that
	\begin{equation}
		\mu_\infty\le \mu(\bx,t)\le \mu_0
		\qquad
		\forall t\in\mathbb{R}_{\ge0},\quad \bx\in\overline{\Omega}.
		\label{eq:mu_bounds}
	\end{equation}
\end{assumption}
Several regularised rheological laws satisfy these assumptions. Representative examples are listed in Table~\ref{tab:viscosity_models}. Note that in the models considered, the viscosity is bounded even if the velocity gradient is not. In the Eyring model, the value at the origin is understood by continuous extension. 

\begin{table}[t]
	\centering
	\begin{tabular}{ll}
		\hline
		Model  & $\mu(\bx,|\grad^s\bu|)$ \\
		\hline
		Carreau \cite{Carreau1972}
		& $\displaystyle \mu_\infty+\frac{\mu_0-\mu_\infty}{(1+\lambda^2|\grad^s\bu|^2)^{(1-n)/2}}$ \\[0.25cm]
		
		Carreau-Yasuda \cite{CarreauYasuda}
		& $\displaystyle \mu_\infty+\frac{\mu_0-\mu_\infty}{(1+\lambda^b|\grad^s\bu|^b)^{(1-n)/b}}$ \\[0.25cm]
		
		Cross \cite{Cross1965}
		& $\displaystyle \mu_\infty+\frac{\mu_0-\mu_\infty}{1+\lambda^r|\grad^s\bu|^{r}}$ \\[0.25cm]
		
		Quemada \cite{Quemada1977}
		& $\displaystyle \mu_\infty\left(\frac{1+\lambda^r|\grad^s\bu|^{r}}{(\mu_0/\mu_\infty)^{1/2}+\lambda^r|\grad^s\bu|^{r}}\right)^2$ \\[0.25cm]
		
		Eyring \cite{Eyring1936}
		& $\displaystyle \mu_\infty+(\mu_0-\mu_\infty)\frac{\operatorname{arcsinh}(\lambda|\grad^s\bu|)}{\lambda|\grad^s\bu|}$ \\
		\hline
	\end{tabular}
	\caption{Regularised apparent-viscosity laws covered by assumptions \textbf{(A1)}-\textbf{(A2)}. Here $\mu_\infty,\mu_0,r,\lambda$ and $b$ are positive constants and $0\le n\le 1$. 
	}
	\label{tab:viscosity_models}
\end{table}

It will be useful to write the viscosity as
\begin{equation}
	\mu(\bx,t)=\mu_\infty+\phi(\bx,t),
	\qquad
	t\ge0,\quad \bx\in\overline{\Omega}.
	\label{eq:mu_phi_decomposition}
\end{equation}
Then $\phi\in C(\overline{\Omega}\times\mathbb{R}_{\ge0};\mathbb{R}_{\ge0})$ is Lipschitz continuous with respect to its second variable, with Lipschitz constant $C_\mu$, and
\begin{equation}
	0\le\phi(\bx,t)\le \mu_0-\mu_\infty,
	\qquad
	t\ge0,\quad \bx\in\overline{\Omega}.
	\label{eq:phi_bounds}
\end{equation}

\subsection{Weak formulation}\label{subsec:weak_formulation}

Let $\LV=H_0^1(\Omega)^d$, $Q=L^2(\Omega)/\mathbb{R}$, and $\LX=\LV\times Q$. The weak formulation of \eqref{eq:strong_momentum}-\eqref{eq:strong_initial} consists of finding $\BU=(\bu,p):(0,T)\to\LX$, satisfying the initial condition, such that
\begin{align}
	(\partial_t\bu,\bv)
	+\overline{c}(\bu,\bu,\bv)
	+(2\mu(|\grad^s\bu|)\grad^s\bu,\grad^s\bv)
	-(p,\div\bv)
	&= \langle\bff,\bv\rangle,
	\label{eq:weak_momentum}\\
	(q,\div\bu)
	&=0,
	\label{eq:weak_continuity}
\end{align}
for all $\BV=(\bv,q)\in\LX$ a.e. in $(0,T)$. The forcing term is assumed to belong to a suitable dual space, for instance $L^2(0,T;\LV')$.

For notational convenience, we introduce the form
\begin{align}
	B(\widehat{\bu};\BU,\BV)
	&=
	\overline{c}(\widehat{\bu},\bu,\bv)
	+
	(2\mu(|\grad^s\widehat{\bu}|)\grad^s\bu,\grad^s\bv)-(p,\div\bv)
	+
	(\div\bu,q).
	\label{eq:B_form}
\end{align}
The weak problem can then be written as: Find $U=(\bu,p): (0,T) \rightarrow\LX$ such that
\begin{equation}
	(\partial_t\bu,\bv)+B(\bu;\BU,\BV)=\langle\bff,\bv\rangle,
	\label{eq:compact_weak_problem}
\end{equation}
for all $\BV=(\bv,q)\in \LX$, a.e. in $(0,T)$.
\subsection{Galerkin finite element approximation}\label{subsec:galerkin}

Given the mesh $\mathcal{T}_h$, we construct conforming finite element spaces for the velocity and pressure, $\LV_h\subset\LV$ and $Q_h\subset Q$ respectively, and we define $\LX_h=\LV_h\times Q_h$. The Galerkin approximation of \eqref{eq:compact_weak_problem} consists of finding $\BU_h=(\bu_h,p_h):(0,T)\to\LX_h$ such that
\begin{equation}
	(\partial_t\bu_h,\bv_h)+B(\bu_h;\BU_h,\BV_h)=\langle\bff,\bv_h\rangle
	,
	\label{eq:galerkin_problem}
\end{equation}
for all $\BV_h=(\bv_h,q_h)\in\LX_h$, a.e. in $(0,T)$.

Testing \eqref{eq:galerkin_problem} with $\BV_h=\BU_h$ and using the skew-symmetry of the convective form gives
\begin{equation}
	B(\bu_h;\BU_h,\BU_h)
	=
	\int_\Omega 2\mu(|\grad^s\bu_h|)|\grad^s\bu_h|^2\,d\bx
	\ge
	2\mu_\infty\double{\grad^s\bu_h}^2 .
	\label{eq:galerkin_energy}
\end{equation}
Thus, the Galerkin form controls the viscous dissipation, but does not provide stability in the pressure unless the  following discrete inf-sup condition holds:
\begin{equation}
	\inf_{q_h\in Q_h\backslash\{0\}}
	\sup_{\bv_h\in\LV_h\backslash\{0\}}
	\frac{(q_h,\div\bv_h)}
	{\double{q_h}\double{\bv_h}_{1}}
	\ge \beta,
	\label{eq:discrete_infsup}
\end{equation}
with $\beta>0$ independent of $h$. In convection-dominated regimes, additional stabilisation is also needed to control unresolved velocity scales. The method analysed below addresses these issues through a non-residual VMS construction with dynamic orthogonal subscales.

\subsection{Non-residual VMS formulation with dynamic subscales}\label{sec:vms}

The stabilised method separates the unresolved contributions associated with the pressure-divergence, convective and pressure-gradient terms. The formulation of the method, proposed by Castillo \& Codina \cite{CastilloCodina2019} for the Navier--Stokes equations, is described next.

We consider continuous and equal-order interpolation spaces for the velocity and pressure, $\LV_h$ and $Q_h$ respectively. Thus, the approximation spaces considered are given by $\LV_h=\left[\Sh\cap H_0^1(\Omega)\right]^d$, $Q_h= \Sh/\mathbb{R}$ and we define $\LX_h=\LV_h\times Q_h$. {Nevertheless, any other conforming approximation could be accommodated; in the case of using discontinuous pressures, interelement boundary terms would be required. We restrict ourselves to equal-order continuous interpolations for the sake of conciseness.}

We introduce a pressure subscale $\tilde p$ and two velocity subscales, $\tilde\bu_1$ and $\tilde\bu_2$, defined by
\begin{align}
	\tau_2^{-1}(\bu_h)\tilde p
	&=
	-P_p^\perp[\div\bu_h],
	\label{eq:pressure_subscale_pointwise}\\
	\partial_t\tilde\bu_1+\tau_1^{-1}(\bu_h)\tilde\bu_1
	&=
	-P_u^\perp[\mathcal{N}(\bu_h,\bu_h)],
	\label{eq:convective_subscale_pointwise}\\
	\partial_t\tilde\bu_2+\tau_1^{-1}(\bu_h)\tilde\bu_2
	&=
	-P_u^\perp[\grad p_h].
	\label{eq:pressure_gradient_subscale_pointwise}
\end{align}
Here $P_u^\perp$ and $P_p^\perp$ are the unresolved velocity and pressure projections introduced in Section~\ref{subsec:preliminaries}. The stabilisation parameters are
\begin{equation}
	\tau_1(\widehat{\bu}_h)
	=
	\left(
	c_1\frac{\mu_c}{h^2}
	+
	c_2\frac{\double{\widehat{\bu}_h}_{\infty}}{h}
	\right)^{-1},
	\qquad
	\tau_2(\widehat{\bu}_h)
	=
	\frac{h^2}{c_1\tau_1(\widehat{\bu}_h)},
	\label{eq:stabilisation_parameters}
\end{equation}
where $c_1,c_2>0$ are algorithmic constants, and $\mu_c$ is a characteristic viscosity. {Note that $\tau_1$ and $\tau_2$ are time-dependent functions, i.e., \eqref{eq:stabilisation_parameters} holds a.e. in $(0,T)$. Likewise, let us remark that when considering refinement with respect to the interpolation order, $c_1$ needs to scale as $k^4$ and $c_2$ as $k$~\cite{Codina2018}.}

\begin{remark}\label{rem:characteristic_viscosity}
	The characteristic viscosity is assumed to satisfy
	$\mu_\infty\le\mu_c\le\mu_0$. A natural choice is the spatial average of the apparent viscosity,
	\[
	\mu_c(\widehat{\bu}_h)
	=
	\frac{1}{|\Omega|}
	\int_\Omega
	\mu(\bx,|\grad^s\widehat{\bu}_h|)\,d\bx .
	\]
	
\end{remark}

	\begin{remark}
		When passing to a fully discrete scheme, the non-linear dependence of $\tau_1$, $\tau_2$
		and $\mu_c$ on $\widehat{u}_h$ requires a consistent temporal extrapolation. First-order
		extrapolations of the form
		\[
		\mu_{n+1}\nabla^T\! u_{n+1} \approx \sqrt{\mu_{n+1}}\sqrt{\mu_n}\,\nabla^T\! u_n,
		\]
		shown in \cite{BarrenecheaCastilloPacheco2024} to preserve unconditional stability for
		variable-viscosity Navier--Stokes problems, could be adapted to the stabilisation
		parameters \eqref{eq:stabilisation_parameters} without requiring \emph{a priori} knowledge of
		$\mu(x,|\nabla^s u_h|)$ at the new time level.
	\end{remark}
	
	Let $\{\phi_i\}_{i=1}^{n_u}$ and $\{\varphi_k\}_{k=1}^{n_p}$ denote the bases used herein for the discrete velocity and pressure spaces. We define
	\[
	\widetilde V_1
	=
	\operatorname{span}\left\{
	P_u^\perp[\mathcal{N}(\phi_i,\phi_j)]
	\right\}_{i,j=1}^{n_u},
	\qquad
	\widetilde V_2
	=
	\operatorname{span}\left\{
	P_u^\perp[\grad\varphi_k]
	\right\}_{k=1}^{n_p}.
	\]
	Both spaces are finite-dimensional subspaces of $L^2(\Omega)^d$. We denote by $\widetilde P_1$ and $\widetilde P_2$ the $L^2(\Omega)$-orthogonal projections onto $\widetilde V_1$ and $\widetilde V_2$, respectively.
	
	The finite element method analysed in this work reads as follows: Find 
	$(\bu_h,p_h,\tilde\bu_1,\tilde\bu_2):(0,T)
	\to
	\LV_h\times Q_h\times\widetilde V_1\times\widetilde V_2$
	such that
	\begin{align}
		(\partial_t\bu_h,\bv_h)
		+(2\mu(\bx,|\grad^s\bu_h|)\grad^s\bu_h,\grad^s\bv_h)
		+\overline{c}(\bu_h,\bu_h,\bv_h)
		-(p_h,\div\bv_h)
		\nonumber\\
		\quad
		+(\tau_2\div\bv_h,P_p^\perp[\div\bu_h])
		-\overline{c}(\bu_h,\bv_h,\tilde\bu_1)
		&=
		\langle\bff,\bv_h\rangle,
		\label{eq:vms_momentum}\\
		(q_h,\div\bu_h)
		-(\grad q_h,\tilde\bu_2)
		&=
		0,
		\label{eq:vms_continuity}\\
		(\partial_t\tilde\bu_1,\tilde\bv_1)
		+\tau_1^{-1}(\tilde\bu_1,\tilde\bv_1)
		+\overline{c}(\bu_h,\bu_h,\tilde\bv_1)
		&=
		0,
		\label{eq:vms_convective_subscale}\\
		(\partial_t\tilde\bu_2,\tilde\bv_2)
		+\tau_1^{-1}(\tilde\bu_2,\tilde\bv_2)
		+(\grad p_h,\tilde\bv_2)
		&=
		0,
		\label{eq:vms_pressure_gradient_subscale}
	\end{align}
	for all
	$(\bv_h,q_h,\tilde\bv_1,\tilde\bv_2)\in
	\LV_h\times Q_h\times\widetilde V_1\times\widetilde V_2$ a.e. in $(0,T)$. We note that in the above formulation the pressure subscale has already been `condensed', leading to the penultimate term on the LHS of \eqref{eq:vms_momentum}.

	This system is supplemented with
	\begin{equation}
		\bu_h(0)=P_{u,0}[\bu_0],
		\qquad
		\tilde\bu_1(0)=\tilde P_1[\bu_0],
		\qquad
		\tilde\bu_2(0)=\tilde P_2[\bu_0],
		\label{eq:vms_initial_conditions}
	\end{equation}
	where $P_{u,0}$ denotes the $L^2(\Omega)$-projection onto $\LV_h$.

	The pointwise subscale equations \eqref{eq:convective_subscale_pointwise}-\eqref{eq:pressure_gradient_subscale_pointwise} are equivalent to the variational subscale equations \eqref{eq:vms_convective_subscale}-\eqref{eq:vms_pressure_gradient_subscale} after projection onto $\widetilde V_1$ and $\widetilde V_2$. Since $\tau_1$ is constant in space for a fixed $\bu_h$, these equations can be written as
	\[
	\widetilde P_1[\partial_t\tilde\bu_1+\tau_1^{-1}\tilde\bu_1]
	=
	-\widetilde P_1[\mathcal{N}(\bu_h,\bu_h)],
	\qquad
	\widetilde P_2[\partial_t\tilde\bu_2+\tau_1^{-1}\tilde\bu_2]
	=
	-\widetilde P_2[\grad p_h],
	\]
	which recovers \eqref{eq:convective_subscale_pointwise}-\eqref{eq:pressure_gradient_subscale_pointwise} in the corresponding subscale spaces.

	The formulation differs from residual-based VMS methods in that the unresolved contributions are not introduced through a single momentum residual. Instead, the pressure-divergence, convective, and pressure-gradient mechanisms are separated before the subscale equations are defined. This distinction is relevant for the analysis. It allows the pressure-gradient contribution to be associated with a specific dynamic velocity subscale, and the divergence defect with a separate pressure stabilisation. Hence, the stability proof can identify which unresolved component controls each part of the discrete momentum balance.

	The next result, proved in \cite{Codina1997} {for some particular cases}, will be very useful in the analysis presented below: there exists a constant $\beta_0>0$, independent of $h$, such that
	\begin{equation}
		\double{\bz_h}
		\le
		\frac{1}{\beta_0}
		\double{P_{u,0}[\bz_h]+P_u^\perp[\bz_h]}
		\le
		\frac{1}{\beta_0}
		\left(
		\double{P_{u,0}[\bz_h]}
		+
		\double{P_u^\perp[\bz_h]}
		\right),
		\label{eq:interpolation_condition}
	\end{equation}
	for every piecewise polynomial vector field $\bz_h$. Here $P_{u,0}[\bz_h]$ denotes the $L^2(\Omega)$-projection of $\bz_h$ onto the finite element space $\LV_h$. {In general, we will accept this inequality as an assumption. It can be expressed as a mild inf-sup condition that holds for most finite element partitions.}
	
	\section{Numerical analysis}\label{sec:analysis}
	
	The purpose of this section is to identify the stability mechanisms provided by the dynamic non-residual VMS formulation. The analysis is carried out at the semi-discrete level, that is, no time discretisation is performed.
	The linearised problem isolates the role of the stabilisation parameters and the dynamic subscales. In contrast, the non-linear problem is treated through a fixed-point argument in a stabilised norm adapted to the pressure-gradient subscale.
	
	\subsection{Linearised semi-discrete problem}\label{subsec:linearised_problem}
	
	We first consider a linearised semi-discrete problem. The role of this step is twofold. On the one hand, it separates the stability properties of the VMS formulation from the non-linear fixed-point argument. On the other hand, it shows how the dynamic subscales enter the energy balance before any time-discretisation effect is present. Let $\widehat{\bu}_h:[0,T]\to\LV_h$ be given. The linearised counterpart of
	\eqref{eq:vms_momentum}-\eqref{eq:vms_pressure_gradient_subscale} is obtained
	by freezing the convective field and the apparent viscosity at
	$\widehat{\bu}_h$. It consists of finding
	$\BU_h=[\bu_h,p_h,\tilde\bu_1,\tilde\bu_2]:[0,T]\to \LV_h\times Q_h\times \widetilde V_1\times \widetilde V_2$ such that:
	\begin{align}
		(\partial_t\bu_h,\bv_h)
		+(2\mu(|\grad^s\widehat{\bu}_h|)\grad^s\bu_h,\grad^s\bv_h)
		+\overline{c}(\widehat{\bu}_h,\bu_h,\bv_h)
		-(p_h,\div\bv_h)
		\nonumber\\
		\quad
		+(\tau_2\div\bv_h,P_p^\perp[\div\bu_h])
		-\overline{c}(\widehat{\bu}_h,\bv_h,\tilde\bu_1)
		&=
		\langle\bff,\bv_h\rangle,
		\label{eq:linear_momentum}\\
		(q_h,\div\bu_h)-(\grad q_h,\tilde\bu_2)
		&=
		0,
		\label{eq:linear_continuity}\\
		(\partial_t\tilde\bu_1,\tilde\bv_1)
		+\tau_1^{-1}(\tilde\bu_1,\tilde\bv_1)
		+\overline{c}(\widehat{\bu}_h,\bu_h,\tilde\bv_1)
		&=
		0,
		\label{eq:linear_convective_subscale}\\
		(\partial_t\tilde\bu_2,\tilde\bv_2)
		+\tau_1^{-1}(\tilde\bu_2,\tilde\bv_2)
		+(\grad p_h,\tilde\bv_2)
		&=
		0,
		\label{eq:linear_pressure_gradient_subscale}
	\end{align}
	for all
	$\BV_h=(\bv_h,q_h,\tilde\bv_1,\tilde\bv_2)\in\LV_h\times Q_h\times \widetilde V_1\times \widetilde V_2$ a.e. in $(0,T)$ and supplemented with the following initial condition
	\begin{equation}
		\bu_h(0)=P_{u,0}[\bu_0],
		\qquad
		\tilde\bu_1(0)=\tilde P_1[\bu_0],
		\qquad
		\tilde\bu_2(0)=\tilde P_2[\bu_0].
		\label{eq:linear_initial_conditions}
	\end{equation}
	
	Throughout this section, unless otherwise stated, we write
	$\tau_1=\tau_1(\widehat{\bu}_h)$ and $\tau_2=\tau_2(\widehat{\bu}_h)$.
	
	\subsection{Existence and uniqueness of the linearised semi-discrete problem}\label{subsec:linear_wellposedness}
	
	Let
	$V^*:=\LV_h\oplus\widetilde V_2$. We define $J^*\subset V^*$ as the finite-dimensional subspace such that
	\begin{equation}
		J^*:=\left\{\bv^*=\bv_h+\tilde\bv_2\in V^*:\bv_h\in\LV_h,\,\,\tilde\bv_2\in \widetilde V_2\text{ and }(q_h,\div\bv_h)-(\grad q_h,\tilde\bv_2)=0\quad\forall q_h\in Q_h \right\}.
	\end{equation}
	We also set
	$\bu^*:=\bu_h+\tilde\bu_2$.
	
	The introduction of $J^*$ is a technical device that allows the pressure to be eliminated from the linearised system. The variable $\bu^*$ contains the resolved velocity and the pressure-gradient subscale, which are coupled through the discrete incompressibility equation. This reformulation reduces the problem to an ordinary differential equation on a finite-dimensional constrained space, after which the pressure can be recovered separately.
	
	\begin{remark}\label{rem:Jstar_nonempty}
		The space $J^*$ is non-empty under the compatibility property of the
		orthogonal subscale construction; see, for instance,
		\cite[Theorem 3]{Codina2008}.
	\end{remark}
	
	\begin{theorem}[Existence and Uniqueness]
		Let $\bff\in L^2(0,T;H^{-1}(\Omega)^d)$ and $\bu_0\in L^2(\Omega)^d$. Then, the semi-discrete problem \eqref{eq:linear_momentum}-\eqref{eq:linear_pressure_gradient_subscale} admits a unique solution satisfying $\bu_h,\tilde\bu_1,\tilde\bu_2\in L^\infty(0,T;L^2(\Omega)^d)$ and $\bu_h\in L^2(0,T;H^1(\Omega)^d)$, and the following estimate holds:
		\begin{align}
			\sup_{0\le t\le T}\double{\bu_h(t)}^2+\sup_{0\le t\le T}\double{\tilde\bu_1(t)}^2+\sup_{0\le t\le T}\double{\tilde\bu_2(t)}^2+\mu_{\infty}\int_0^T\double{\grad^s\bu_h}^2\dds & \nonumber\\
			\lesssim \frac{c_K^2}{\mu_{\infty}}\int_0^T\double{\bff}^2_{H^{-1}}\dds+\double{\bu_{0}}^2.&
		\end{align}
	\end{theorem}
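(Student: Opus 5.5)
We plan to recast the linearised system as a finite-dimensional ODE with an algebraic constraint, obtain existence and uniqueness from standard ODE theory, and then derive the bound by an energy argument. Since $\widehat{\bu}_h$ is given, the coefficients $\mu(|\grad^s\widehat{\bu}_h|)$, $\tau_1$ and $\tau_2$ are known functions of time. We will assume they are measurable and bounded on $[0,T]$, using $\double{\widehat{\bu}_h}_\infty<\infty$ and the fact that $\tau_1^{-1}\ge c_1\mu_\infty/h^2$.

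\emph{Reduction to an ODE on $J^*$.} Testing \eqref{eq:linear_momentum} with $\bv_h$ and \eqref{eq:linear_pressure_gradient_subscale} with $\tilde\bv_2$, where $\bv_h+\tilde\bv_2\in J^*$, and adding, the pressure terms become $-(p_h,\div\bv_h)+(\grad p_h,\tilde\bv_2)$. This vanishes by the definition of $J^*$ with $q_h=p_h$. Together with \eqref{eq:linear_convective_subscale}, we obtain a linear system for $(\bu^*,\tilde\bu_1)\in J^*\times\widetilde V_1$ with mass matrix given by the $L^2$ inner product. The decomposition $\bu^*=\bu_h+\tilde\bu_2$ is unique because $\widetilde V_2\subset \operatorname{range}P_u^\perp$ is $L^2$-orthogonal to $\LV_h$, so $(\partial_t\bu_h,\bv_h)+(\partial_t\tilde\bu_2,\tilde\bv_2)=(\partial_t\bu^*,\bv^*)$ and the mass matrix is symmetric positive definite on $J^*$. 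The coefficients are in $L^\infty(0,T)$ and $\bff\in L^2(0,T;H^{-1})$, so Carathéodory's theorem for linear ODEs gives a unique absolutely continuous solution. The initial data \eqref{eq:linear_initial_conditions} must be compatible with $J^*$; we project them onto $J^*$ if necessary, as is customary.

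\emph{Recovery of the pressure.} The momentum residual $\bv^*\mapsto \langle\bff,\bv_h\rangle-\dots$ vanishes on $J^*$, so it lies in the annihilator of $J^*$. This annihilator is exactly $\{(\bv_h,\tilde\bv_2)\mapsto -(q_h,\div\bv_h)+(\grad q_h,\tilde\bv_2): q_h\in Q_h\}$, by finite-dimensional duality, since $J^*$ is the kernel of that map. This produces $p_h(t)$, which is unique modulo constants provided the map $q_h\mapsto$ (functional) is injective on $Q_h$. Injectivity follows because if $(q_h,\div\bv_h)=(\grad q_h,\tilde\bv_2)$ vanishes for all test functions, then taking $\tilde\bv_2=P_u^\perp[\grad q_h]$ and $\bv_h$ arbitrary gives $\grad q_h=0$.

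\emph{Energy estimate.} We test with $\bv_h=\bu_h$, $q_h=p_h$, $\tilde\bv_1=\tilde\bu_1$, $\tilde\bv_2=\tilde\bu_2$ and add all four equations. The pressure terms cancel. The skew-symmetry property gives $\overline{c}(\widehat{\bu}_h,\bu_h,\bu_h)=0$, and the coupling terms $-\overline{c}(\widehat{\bu}_h,\bu_h,\tilde\bu_1)+\overline{c}(\widehat{\bu}_h,\bu_h,\tilde\bu_1)$ cancel exactly. This leaves
\[
\tfrac12\tfrac{d}{dt}\big(\double{\bu_h}^2+\double{\tilde\bu_1}^2+\double{\tilde\bu_2}^2\big)+2\mu_\infty\double{\grad^s\bu_h}^2+\tau_2\double{P_p^\perp[\div\bu_h]}^2+\tau_1^{-1}\big(\double{\tilde\bu_1}^2+\double{\tilde\bu_2}^2\big)\le \langle\bff,\bu_h\rangle .
\]
The right-hand side is bounded by $\double{\bff}_{H^{-1}}c_K\double{\grad^s\bu_h}\le \frac{c_K^2}{2\mu_\infty}\double{\bff}_{H^{-1}}^2+\frac{\mu_\infty}{2}\double{\grad^s\bu_h}^2$, using \eqref{eq:poincare_korn}. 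We then integrate over $(0,t)$ and take the supremum over $t$. The initial terms are bounded by $\double{\bu_0}^2$ because the projections are $L^2$-contractions. The main obstacle is the rigorous well-posedness step, namely the compatibility of the initial data with $J^*$ and the measurability and boundedness of the time-dependent coefficients. The energy identity itself is straightforward once the exact cancellations are observed.
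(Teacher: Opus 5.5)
Your argument follows the paper's proof. You eliminate the pressure by working on $J^*$, solve the resulting linear Cauchy problem for $(\bu^*,\tilde\bu_1)$, and test with the solution itself so that the pressure terms and the $\tilde\bu_1$-coupling terms cancel. You then recover $p_h$ from the complement of $J^*$. The differences are minor:
\begin{itemize}
\item You get global existence directly from Carath\'eodory's theorem for linear ODEs. The paper instead proves local existence and continues the solution using the energy bound.
\item You obtain $p_h$ from the finite-dimensional identity that the annihilator of $\ker L$ is the range of $L^{T}$. The paper instead poses a Lax--Milgram problem tested with $P_{u,0}[\grad q_h]+P_u^\perp[\grad q_h]$.
\end{itemize}

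Your caution about the initial data is justified, and the paper does not address it. The datum $\bu^*(0)=P_{u,0}[\bu_0]+\tilde P_2[\bu_0]$ lies in $J^*$ only if $(P_{u,0}[\grad q_h]+P_u^\perp[\grad q_h],\bu_0)=0$ for all $q_h\in Q_h$. This fails in general, even for solenoidal $\bu_0$, because $P_{u,0}\neq P_u$. Since the $L^2$-projection onto $J^*$ is a contraction, the bound in terms of $\double{\bu_0}^2$ survives your fix.

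One step does not follow as written: the injectivity argument that gives uniqueness of $p_h$.
\begin{itemize}
\item Testing with $\tilde\bv_2=P_u^\perp[\grad q_h]$ gives $P_u^\perp[\grad q_h]=0$.
\item Letting $\bv_h$ range over $\LV_h$ gives only $P_{u,0}[\grad q_h]=0$, not $P_u[\grad q_h]=0$, because of the boundary condition.
\end{itemize}
So you cannot yet conclude $\grad q_h=0$: without further information, an element of $\Sh^d$ that is orthogonal to $\LV_h$ need not vanish. The missing ingredient is \eqref{eq:interpolation_condition} applied to $\bz_h=\grad q_h$. It gives $\double{\grad q_h}\le\beta_0^{-1}\big(\double{P_{u,0}[\grad q_h]}+\double{P_u^\perp[\grad q_h]}\big)=0$.

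The paper relies on the same inequality without saying so. With its test function, the left-hand side is $-(\grad p_h,P_{u,0}[\grad q_h]+P_u^\perp[\grad q_h])$, not $(\grad p_h,\grad q_h)$, and its coercivity on $Q_h$ again comes from \eqref{eq:interpolation_condition}. With that assumption invoked at this point, your proof is complete.
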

	
	\begin{proof} 
		We follow the standard approach of first reformulating the stabilised semi-discrete problem \eqref{eq:linear_momentum}-\eqref{eq:linear_pressure_gradient_subscale} as an equation only for the velocity variables. To this end, we note that $\LV_h\cap\widetilde V_2=\{\boldsymbol{0}\}$, and thus any function $\bv^*\in V^*$ admits the unique decomposition $\bv^*=P_{u,0}[\bv^*]+\tilde P_2[\bv^*]$. 
		
		System \eqref{eq:linear_momentum}-\eqref{eq:linear_pressure_gradient_subscale} can be stated as follows: find $\bu^*\in H^1(0,T;J^*)$ and $\tilde\bu_1\in H^1(0,T;\widetilde V_1)$ such that 
		\begin{align}
			(\partial_tP_{u,0}[\bu^*],P_{u,0}[\bv^*])+(2\mu(|\grad^s\hat\bu_h|)\grad^sP_{u,0}[\bu^*],\grad^sP_{u,0}[\bv^*])\hspace{0.55cm}& \nonumber\\
			+\overline{c}(\hat\bu_h; P_{u,0}[\bu^*],P_{u,0}[\bv^*])+\tau_2(\div P_{u,0}[\bv^*],P_p^\perp[\div P_{u,0}[\bu^*]])&\label{CarreauReducido1}\\
			-\overline{c}(\hat\bu_h,P_{u,0}[\bv^*],\tilde\bu_1)+(\partial_t\tilde P_2[\bu^*],\tilde P_2[\bv^*])+\tau_1^{-1}(\tilde P_2[\bu^*],\tilde P_2[\bv^*])&= \left<\bff,P_{u,0}[\bv^*]\right>, \nonumber\\
			(\partial_t\tilde\bu_1,\bv_1)+(\tau_1^{-1}\tilde\bu_1,\tilde\bv_1)+\overline{c}(\hat\bu_h,P_{u,0}[\bu^*],\tilde \bv_1) &= 0, \label{CarreauReducido2}
		\end{align}
		for any $\bv^*\in J^*$ and $\tilde\bv_1\in\widetilde V_1$ a.e. in $(0,T)$, with the initial condition
		\begin{eqnarray}
			\bu^*(0)=P_{u,0}[\bu_0]+\tilde P_2[\bu_0] \hspace{0.5cm}\text{and}\hspace{0.5cm} \tilde\bu_1(0)=\tilde P_1[\bu_0], \nonumber
		\end{eqnarray}
		which is a standard (linear) Cauchy problem. Then using the theory of ordinary differential equations, we can then prove that $\bu^*(t)$ and $\tilde\bu_1(t)$ do not blow up in some time interval $[0,t_h)$ for $t_h>0$ small enough (see, e.g., \cite[Lemma 1.1, Chapter 3]{Teman1984}).
		Now, we are in position to test \eqref{CarreauReducido1} and \eqref{CarreauReducido2} with $\bv^*=\bu^*$ and $\tilde\bv_1=\tilde\bu_1$, respectively, and using the fact that
		$$\overline{c}(\hat\bu_{h},P_{u,0}[\bu^*],P_{u,0}[\bu^*])=0,$$
		we obtain
		\begin{align}
			\frac{1}{2}\frac{\textrm{d}}{\textrm{d} t}\double{\bu^*}^2+\mu_\infty\double{\grad^s P_{u,0}[\bu^*]}^2+\tau_2\double{P_p^\perp[\div P_{u,0}[\bu^*]]}^2+\tau_1^{-1}&\double{\tilde P_2[\bu^*]}^2\nonumber\\
			+\frac{1}{2}\frac{\textrm{d}}{\textrm{d} t}\double{\tilde\bu_1}^2+\tau_1^{-1}\double{\tilde\bu_1}^2&\le\langle\bff,P_{u,0}[\bu^*]\rangle, 
		\end{align}
		and integrating from $0$ to $t$, and using Korn's and Young's inequalities yields
		\begin{equation}
			\double{\bu^*(t)}^2+\double{\tilde\bu_1(t)}^2+\mu_\infty\int_0^t\double{\grad P_{u,0}[\bu^*]}^2\textrm{d}s\lesssim \frac{c_K^2}{\mu_\infty}\int_0^t\double\bff^2_{H^{-1}}\textrm{d}s+ \double{P_{u,0}[\bu_0]}^2+\double{\tilde P_1[\bu_0]}^2.
		\end{equation}
		
		This energy bound allows us to extend the existence and regularity results for $\bu^*$ over the whole time interval $[0,T]$.
		
		Given $\bu^*(t)$ and $\tilde\bu_1(t)$, the problem for the pressure now reads as follows: find $p_h(t)\in Q_h$ such that
		\begin{align}
			&(p_h,\div P_{u,0}[\bv^*])-(\tilde P_2[\bv^*],\grad p_h) \nonumber \\
			&= (\partial_t\bu^*,\bv^*)+(2\mu(|\grad^s\hat\bu_h|)\grad^sP_{u,0}[\bu^*],\grad^sP_{u,0}[\bv^*])+\overline{c}(\hat\bu_h, P_{u,0}[\bu^*],\bv^*)
			\label{eq:presionequation}\\
			&\quad\ +(\tau_2\div P_{u,0}[\bv^*],P_p^\perp[\div P_{u,0}[\bu^*]])-\overline{c}( \hat\bu_h,P_{u,0}[\bv^*],\tilde\bu_1)\nonumber\\
			&\quad\ -\left<\bff,P_{u,0}[\bv^*]\right>\nonumber \\
			&=:\mathcal{F}(\bv^*) \nonumber
		\end{align}
		a.e. in $(0,T)$, for any $\bv^*\in V^*$. Defining $K^*:=P_{u,0}[\grad Q_h]\oplus P_u^\perp[\grad Q_h]\subset V^*$ we get $J^*=(K^*)^\perp\cap V^*$. In addition, choosing $\bv^*=P_{u,0}(\nabla q_h)+P_h^\perp(\nabla q_h)\in V^*$ in \eqref{eq:presionequation} we see that $p_h$ satisfies the following finite element problem:
		\begin{align}
			(\grad p_h,\grad q_h)=\mathcal{F}(P_{u,0}[\grad q_h]+P_u^\perp[\grad q_h])
		\end{align}
		for all $q_h\in Q_h$. Since the left-hand side of the above problem defines an elliptic bilinear form in $Q_h$, the existence and uniqueness of this problem are a consequence of Lax--Milgram's Lemma. On the other hand, for $\bv^*\in J^*$, both the left-hand side and the
		right-hand side of \eqref{eq:presionequation} vanish. Thus, problem \eqref{CarreauReducido1}-\eqref{CarreauReducido2} has a unique solution $(\bu^*,p_h,\tilde\bu_1)$, and the result follows.
	\end{proof}
	
	\subsection{Stability of the linearised semi-discrete problem}\label{subsec:linear_stability}
	
	The previous estimate provides the basic energy control of the resolved velocity and the dynamic subscales. However, it does not yet display the full stabilising effect of the formulation. In particular, the non-residual VMS terms are designed to control a coupled balance involving acceleration, convection, and pressure gradient, motivating the introduction of the following anisotropic norm
	\begin{align}
		\triple{\BV_h}_{W}^2
		&:=
		\mu_\infty\double{\grad^s\bv_h}^2
		+
		\tau_1^{-1}\double{\tilde\bv_1}^2
		+
		\tau_1^{-1}\double{\tilde\bv_2}^2
		\nonumber\\
		&\quad\ 
		+
		\tau_1
		\double{
			\partial_t(\bv_h+\tilde\bv_1+\tilde\bv_2)
			+
			\mathcal{N}(\widehat{\bu}_h,\bv_h)
			+
			\grad q_h
		}^2
		+
		\tau_2\double{\div\bv_h}^2 .
		\label{eq:working_norm}
	\end{align}
	
	\begin{remark}\label{rem:anisotropic_norm}
		Besides the viscous dissipation and the dissipation of the subscales, this norm controls the coupled balance between acceleration, convection, and pressure gradient. Since the estimate below is obtained before temporal discretisation, no time-step parameter enters the argument. In this sense, the result is the semi-discrete counterpart of an unconditional anisotropic stability bound. {When performing the time discretisation, the time step size and the mesh size $h$ will be allowed to converge to zero at independent rates, i.e., the space-time discretisation will be anisotropic.}
	\end{remark}

	\begin{theorem}[Stability]\label{thm:linear_stability}
		Let $ \BU_h=(\bu_h,p_h,\tilde\bu_1,\tilde\bu_2)$ be the solution to \eqref{eq:linear_momentum}-\eqref{eq:linear_pressure_gradient_subscale}. Suppose that Assumption~\ref{ass:viscosity_bound} and inequality \eqref{eq:interpolation_condition} hold; then we have the following stability bound:
		\begin{align}
			\sup_{0\le t\le T}\double{\bu_h(t)}^2+\sup_{0\le t\le T}\double{\tilde\bu_1(t)}^2+\sup_{0\le t\le T}\double{\tilde\bu_2(t)}^2+\mu_{\infty}\int_0^T\triple{\BU_h}_{W}^2ds& \nonumber\\
			\lesssim \frac{c_K^2}{\mu_{\infty}}\int_0^T\double{\bff}^2_{H^{-1}}ds+\double{\bu_{0}}^2. \label{iq:Stability}
		\end{align}
		Therefore, if $\bff\in L^2(0,T;H^{-1}(\Omega)^d)$ and $\bu_{0}\in L^2(\Omega)^d$, we have that
		$$\bu_h\in L^\infty(0,T;L^2(\Omega)^d),\hspace{0.5cm}\tilde\bu_1\in L^\infty(0,T;L^2(\Omega)^d),\hspace{0.5cm} \tilde\bu_2\in L^\infty(0,T;L^2(\Omega)^d),   $$
		$$\mu_\infty^{
			1/2}\bu_h\in L^2(0,T;H^1(\Omega)^d),\hspace{0.5cm}\tau_1^{-\frac{1}{2}}\tilde\bu_1\in L^2(0,T;L^2(\Omega)^d),\hspace{0.5cm}\tau_1^{-\frac{1}{2}}\tilde\bu_2\in L^2(0,T;L^2(\Omega)^d),$$
		{with bounds independent of the mesh size $h$.}
	\end{theorem}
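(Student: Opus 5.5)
The plan has two parts. The energy estimate of the existence and uniqueness theorem already controls the sup-norm terms, the viscous dissipation and the subscale dissipation. What remains is to control, in the time integral, the two new contributions to $\triple{\BU_h}_W^2$: the divergence term $\tau_2\double{\div\bu_h}^2$ and the coupled balance $\tau_1\double{\partial_t(\bu_h+\tilde\bu_1+\tilde\bu_2)+\mathcal{N}(\widehat{\bu}_h,\bu_h)+\grad p_h}^2$.

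\textbf{Energy step.} I will test \eqref{eq:linear_momentum}--\eqref{eq:linear_pressure_gradient_subscale} with $(\bu_h,p_h,\tilde\bu_1,\tilde\bu_2)$ and add the four equations. The pressure terms cancel: $-(p_h,\div\bu_h)+(p_h,\div\bu_h)=0$, and $-(\grad p_h,\tilde\bu_2)+(\grad p_h,\tilde\bu_2)=0$. The convective terms cancel in the same way, since $\overline{c}(\widehat{\bu}_h,\bu_h,\bu_h)=0$ and $-\overline{c}(\widehat{\bu}_h,\bu_h,\tilde\bu_1)+\overline{c}(\widehat{\bu}_h,\bu_h,\tilde\bu_1)=0$. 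This gives
\[
\frac12\frac{\textrm{d}}{\textrm{d}t}\big(\double{\bu_h}^2+\double{\tilde\bu_1}^2+\double{\tilde\bu_2}^2\big)+2\mu_\infty\double{\grad^s\bu_h}^2+\tau_2\double{P_p^\perp[\div\bu_h]}^2+\tau_1^{-1}\big(\double{\tilde\bu_1}^2+\double{\tilde\bu_2}^2\big)\le\langle\bff,\bu_h\rangle .
\]
I then use Korn's inequality \eqref{eq:poincare_korn}, Young's inequality and integration in time.

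\textbf{Divergence term.} The missing piece is $P_p[\div\bu_h]$. From \eqref{eq:linear_continuity}, for every $q_h$ we have $(q_h,\div\bu_h)=(\grad q_h,\tilde\bu_2)$. Since $\tilde\bu_2\in\widetilde V_2$ is orthogonal to $\LV_h$, this equals $(P_u^\perp[\grad q_h],\tilde\bu_2)$. Taking $q_h=P_p[\div\bu_h]$ and applying the inverse inequality \eqref{eq:inverse_inequality} gives $\double{P_p[\div\bu_h]}\lesssim h^{-1}\double{\tilde\bu_2}$. Because $\tau_2\tau_1=h^2/c_1$, it follows that $\tau_2\double{P_p[\div\bu_h]}^2\lesssim\tau_1^{-1}\double{\tilde\bu_2}^2$. (Alternatively, one can simply use $\tau_2\double{\div\bu_h}^2\lesssim\tau_2\double{\grad\bu_h}^2$ and $\tau_2\lesssim\mu_c$, but that loses the $\mu_0/\mu_\infty$ ratio. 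I will use the subscale bound.)

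\textbf{Balance term (main obstacle).} Set $\bz:=\partial_t(\bu_h+\tilde\bu_1+\tilde\bu_2)+\mathcal{N}(\widehat{\bu}_h,\bu_h)+\grad p_h$. I will split it as $P_{u,0}[\bz]+P_u^\perp[\bz]$ and use \eqref{eq:interpolation_condition}, which applies since $\bz$ is piecewise polynomial. Note that $\partial_t\tilde\bu_i\perp\LV_h$.

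\emph{Fine part.} Using the pointwise subscale equations \eqref{eq:convective_subscale_pointwise}--\eqref{eq:pressure_gradient_subscale_pointwise} in their projected form,
\[
P_u^\perp[\bz]=\partial_t\tilde\bu_1+\partial_t\tilde\bu_2+P_u^\perp[\mathcal N]+P_u^\perp[\grad p_h]=-\tau_1^{-1}(\tilde\bu_1+\tilde\bu_2),
\]
up to the components of $P_u^\perp[\mathcal N],P_u^\perp[\grad p_h]$ lying outside $\widetilde V_1,\widetilde V_2$. I expect these to vanish by the definitions of $\widetilde V_i$; if they do not, they must be handled separately. Granting this, $\tau_1\double{P_u^\perp[\bz]}^2\lesssim\tau_1^{-1}(\double{\tilde\bu_1}^2+\double{\tilde\bu_2}^2)$.

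\emph{Coarse part.} By \eqref{eq:linear_momentum}, for $\bv_h\in\LV_h$,
\[
(\bz,\bv_h)=\langle\bff,\bv_h\rangle-(2\mu\grad^s\bu_h,\grad^s\bv_h)-\tau_2(\div\bv_h,P_p^\perp[\div\bu_h])+\overline{c}(\widehat{\bu}_h,\bv_h,\tilde\bu_1),
\]
after rewriting $-(p_h,\div\bv_h)=(\grad p_h,\bv_h)$. I will take $\bv_h=\tau_1P_{u,0}[\bz]$ and bound each term by inverse estimates:
\begin{itemize}
\item viscous term: $\tau_1\mu_0h^{-1}\double{\grad^s\bu_h}\,\double{P_{u,0}\bz}$, and $\tau_1\mu_0 h^{-2}\lesssim\mu_0/\mu_c$;
\item divergence term: handled with $\tau_2\tau_1h^{-2}\lesssim1$;
\item convective term: uses $\tau_1\double{\widehat{\bu}_h}_\infty h^{-1}\le1/c_2$.
\end{itemize}
The forcing is the delicate term. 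Only $\int\tau_1\langle\bff,\bv\rangle$ appears, and with $\tau_1\lesssim h^2/\mu_\infty$ plus an inverse estimate I get $\tau_1\langle\bff,P_{u,0}\bz\rangle\lesssim\tau_1^{1/2}\mu_\infty^{-1/2}\double{\bff}_{H^{-1}}\,\tau_1^{1/2}\double{P_{u,0}\bz}$.

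Combining everything after multiplying by $\mu_\infty$ and integrating in time gives \eqref{iq:Stability}, with constants depending on the ratio $\mu_0/\mu_\infty$. I expect the main difficulty to be tracking the constants in the coarse-part bound and the forcing term.
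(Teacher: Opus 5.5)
Your proof is correct and rests on the same ingredients as the paper's. These are the energy test with $\BU_h$, the test function $\tau_1P_{u,0}[\bz]$ in the momentum equation, and $q_h=P_p[\div\bu_h]$ in the continuity equation. They also include the identity $P_u^\perp[\bz]=-\tau_1^{-1}(\tilde{\bu}_1+\tilde{\bu}_2)$ for the unresolved part and \eqref{eq:interpolation_condition} to recombine. What differs is how these pieces are assembled. The paper tests once with $\BU_h+\BV_{h1}+\BV_{h2}$ and absorbs all cross terms into the left-hand side. This forces $c_1$ and $c_2$ to be large: for the viscous cross term $c_1$ must be large relative to $\mu_0^2/(\mu_\infty\mu_c)$, the divergence term needs $c_1>3c_{\rm inv}^2$, and the convective term needs $c_2$ large relative to $c_{\rm inv}(1+d)$. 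The viscosity ratio thereby ends up hidden in the choice of $c_1$. You instead close the energy estimate first and only then read off $\tau_2\double{P_p[\div\bu_h]}^2$ and $\tau_1\double{P_{u,0}[\bz]}^2$ from the equations, bounding every right-hand side by quantities that are already controlled. This uses only $\tau_1\mu_c h^{-2}\le c_1^{-1}$, $\tau_1\double{\widehat{\bu}_h}_\infty h^{-1}\le c_2^{-1}$ and $\tau_1\tau_2h^{-2}=c_1^{-1}$. It therefore works for arbitrary $c_1,c_2>0$, at the price of constants depending on $c_1^{-1}$, $c_2^{-1}$ and $\mu_0/\mu_\infty$. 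You can recover the paper's viscosity-independent constants by again taking $c_1$ large relative to $\mu_0^2/(\mu_\infty\mu_c)$.

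A few small points. The hedge in your fine part is unnecessary. Since $\widehat{\bu}_h,\bu_h\in\LV_h$ and $\mathcal N$ is bilinear, $P_u^\perp[\mathcal N(\widehat{\bu}_h,\bu_h)]\in\widetilde V_1$, and likewise $P_u^\perp[\grad p_h]\in\widetilde V_2$. Because $\widetilde V_i\perp\Sh^d$, the variational subscale equations give $\partial_t\tilde{\bu}_1+\tau_1^{-1}\tilde{\bu}_1=-P_u^\perp[\mathcal N(\widehat{\bu}_h,\bu_h)]$ and $\partial_t\tilde{\bu}_2+\tau_1^{-1}\tilde{\bu}_2=-P_u^\perp[\grad p_h]$ exactly. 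Your forcing estimate has a duplicated factor $\tau_1^{1/2}$. From $\tau_1h^{-1}\le(c_1\mu_\infty)^{-1/2}\tau_1^{1/2}$ one gets $\tau_1\langle\bff,P_{u,0}[\bz]\rangle\lesssim\mu_\infty^{-1/2}\double{\bff}_{H^{-1}}\,\tau_1^{1/2}\double{P_{u,0}[\bz]}$. The alternative you discarded for the divergence term is in fact wrong in the convection-dominated regime. There $\tau_2=\mu_c+(c_2/c_1)h\double{\widehat{\bu}_h}_\infty$ is not bounded by $\mu_c$, so the subscale route you chose is the right one. Finally, do not multiply by $\mu_\infty$ at the end. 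The norm $\triple{\cdot}_W^2$ already contains $\mu_\infty\double{\grad^s\bv_h}^2$, so the prefactor $\mu_\infty$ in front of the time integral in \eqref{iq:Stability} is dimensionally inconsistent. What both your argument and the paper's actually establish is a bound for $\int_0^T\triple{\BU_h}_W^2\,ds$, without that prefactor, by the same right-hand side.
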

	
	\begin{proof}
		Taking $(\bv_h,q_h,\tilde\bv_1,\tilde\bv_2)=\BU_{h1}=(\bu_h,p_h,\tilde\bu_1,\tilde\bu_2)$ in (\ref{eq:linear_momentum})-(\ref{eq:linear_pressure_gradient_subscale}) and adding we can write
		\begin{align}
			&(\partial_t\bu_h,\bu_h)+\mu_\infty\double{\grad^s\bu_h}^2+\tau_2\double{P_p^\perp[\div\bu_h]}^2  \nonumber\\
			&+(\partial_t\tilde\bu_1,\tilde\bu_1)+(\partial_t\tilde\bu_2,\tilde\bu_2)+\tau_1^{-1}\double{\tilde\bu_1}^2+\tau_1^{-1}\double{\tilde\bu_2}^2 \le \left<\bff,\bu_h\right>.\label{iq:stab1}
		\end{align}
		Using the inequality $a^2+b^2\ge a^2/3+b^2/3+(a+b)^2/3$ in the last two terms of the left-hand side of (\ref{iq:stab1}), we obtain
		\begin{align}
			(\partial_t\bu_h,\bu_h)+(\partial_t\tilde\bu_1,\tilde\bu_1)+(\partial_t\tilde\bu_2,\tilde\bu_2)+\mu_\infty\double{\grad^s\bu_h}^2+\tau_2\double{P_p^\perp[\div\bu_h]}^2 & \nonumber\\
			+\frac{\tau_1^{-1}}{3}\double{\tilde\bu_1}^2+\frac{\tau_1^{-1}}{3}\double{\tilde\bu_2}^2+\frac{\tau_1^{-1}}{3}\double{\tilde\bu_1+\tilde\bu_2}^2 \le \left<\bff,\bu_h\right>.& \nonumber
		\end{align}

		Let us consider $\BV_{h1}=\tau_1(\bv_1,0,\boldsymbol0,\boldsymbol0)$ with $\bv_1=P_{u,0}\left[\partial_t\bu_h+\mathcal{N}(\hat\bu_h,\bu_h)+\grad p_h\right]$. Testing \eqref{eq:linear_momentum}-\eqref{eq:linear_pressure_gradient_subscale} with $\BV_{h1}$, we have that
		\begin{align*}
			\tau_1(\mu(|\grad^s\hat\bu_h|)\grad^s\bu_h,\grad^s\bv_1)+\tau_1\double{\bv_1}^2&+\tau_1(\tilde\bu_1,P_u^\perp[\mathcal{N}(\hat\bu_h,\bv_1)])\\
			&+\tau_1\tau_2(P_p^\perp[\div\bu_h],\div\bv_1)
			=\tau_1\left<\bff,\bv_1\right>.
		\end{align*}
		Applying Cauchy--Schwarz's and Young's inequality in all products, the inverse inequality \eqref{eq:inverse_inequality}, and using the bound Assumption~\ref{ass:viscosity_bound}, i.e. $|\mu(|\grad^s\hat\bu_h|)|\le \mu_0$, we obtain the following result: 
		\begin{align}
			\tau_1\left<\bff,\bv_1\right>\ge&-\left(\mu_\infty\frac{\double{\grad^s\bu_h}^2}{8}+\frac{4\tau_1^2\double{\bv_1}^2\mu_0^2}{h^2\mu_\infty}\right)-\double{\hat\bu_h}_\infty c_{\rm inv}(1+d)\left(\frac{\double{\tilde\bu_1}^2}{2h}+\tau_1^2\frac{\double{\bv_1}^2}{2h}\right) \nonumber\\
			& -\left(\frac{\tau_2\double{P_p^\perp[\div\bu_h]}^2}{2}+\frac{c_{\rm inv}^2\tau_1^2\tau_2\double{\bv_1}^2}{2h^2}\right)+\tau_1\double{\bv_1}^2.
		\end{align}
		To control $P_p[\div\bu_h]$, we proceed taking $\BV_{h2}=\tau_2(\boldsymbol{0},P_p[\div\bu_h],\boldsymbol{0},\boldsymbol{0})$ in \eqref{eq:linear_momentum}-\eqref{eq:linear_pressure_gradient_subscale}:
		\begin{align*}
			\tau_2(\div\bu_h,P_p[\div\bu_h])\ +&\ \tau_2(\tilde\bu_2,\grad(P_p[\div\bu_h]))\\
			 \ge&\ \tau_2\left(1-c_{\rm inv}^2\tau_1\frac{3\tau_2}{h^2}\right)\double{P_p[\div\bu_h]}^2-\frac{\tau_1^{-1}}{6}\double{\tilde\bu_2}^2. \nonumber
		\end{align*}
		Finally, testing the linear equations \eqref{eq:linear_momentum}-\eqref{eq:linear_pressure_gradient_subscale} with $\BV_h^*:=\BU_{h}+\BV_{h1}+\BV_{h2}$, using the Poincar\'e inequality and \eqref{eq:korn_simple} we get:
		\begin{align*}
			\mu_\infty\frac{\double{\grad^s\bu_h}^2}{2}+\tau_1\left(1-\frac{\tau_1\mu_c}{h^2}\frac{4\mu_0^2}{\mu_\infty\mu_c}-\frac{\tau_1\double{\hat\bu_h}_\infty c_{\rm inv}(1+d)}{2h}-\frac{c_{\rm inv}^2\tau_1\tau_2}{2h^2}\right)\double{\bv_1}^2 &\nonumber\\
			+\frac{\tau_1^{-1}}{3}\double{\tilde\bu_1+\tilde\bu_2}^2+\tau_2\left(1-c_{\rm inv}^2\tau_1\frac{3\tau_2}{h^2}\right)\double{P_p[\div\bu_h]}^2+\frac{\tau_2}{2}\double{P_p^\perp[\div\bu_h]}^2 \nonumber\\
			+\tau_1^{-1}\left(\frac{1}{3}-c_{\rm inv}(1+d)\frac{\tau_1\double{\hat\bu_h}_\infty}{2h}\right)\double{\tilde\bu_1}^2+\frac{\tau_1^{-1}}{6}\double{\tilde\bu_2}^2\\
			\le\frac{c^2_\Omega}{2\mu_\infty}\double{\bff}_{H^{-1}}^2.&
		\end{align*}
		Now using the definition of the stabilisation parameters \eqref{eq:stabilisation_parameters}, we have that
		\begin{equation}
			\frac{\tau_1\mu_c}{h^2}\le c_1^{-1},\hspace{0.5cm}\frac{\tau_1\double{\hat\bu_h}_\infty}{h}\le c_2^{-1},\hspace{0.5cm}\frac{\tau_1\tau_2}{h^2}\le  c_1^{-1}.
		\end{equation}
		Taking the algorithmic constants $c_1,c_2$ large enough, it follows that there exist positive constants $C_j$, $j=1,\ldots,6$, such that
		\begin{align}
			C_1\mu_\infty \double{\grad^s\bu_h}^2+C_2\tau_1\double{P_{u,0}\left[\partial_t(\bu_h+\tilde\bu_1+\tilde\bu_2)+\mathcal{N}(\hat\bu_h,\bu_h)+\grad p_h\right]}^2\nonumber\\
			+C_3\tau_1^{-1}\double{\tilde\bu_1+\tilde\bu_2}^2+C_4\tau_2\double{P_p[\div\bu_h]}^2+\frac{\tau_2}{2}\double{P_p^\perp[\div\bu_h]}^2\nonumber\\
			+C_5\tau_1^{-1}\double{\tilde\bu_1}^2+C_6\tau_1^{-1}\double{\tilde\bu_2}^2 &\le \frac{c_\Omega^2}{2\mu_\infty}\double\bff_{H^{-1}}^2. \nonumber
		\end{align}
		Now to obtain control in $\partial_t(\bu_h+\tilde\bu_1+\tilde\bu_2)+\mathcal{N}(\hat\bu_h,\bu_h)+\grad p_h$, we use the fact that
		$$-\tau_1^{-1/2}(\tilde\bu_1+\tilde\bu_2)=\tau_1^{1/2}P_u^\perp[\partial_t(\bu_h+\tilde\bu_1+\tilde\bu_2)+\mathcal{N}(\hat\bu_h,\bu_h)+\grad p_h].$$
		Thus, the proof follows from the stability \eqref{eq:interpolation_condition}.
	\end{proof}
	
	The estimate in Theorem~\ref{thm:linear_stability} is the key result on stability of this work. Its relevance lies not only in the bound being uniform with respect to the mesh size, but also in its control of the pressure-gradient contribution through the dynamic subscale equation. This point will be used again in the non-linear analysis, where the projected pressure-gradient error is the term that distinguishes the present formulation from a quasi-static subscale approach.
	
	\subsection{Non-linear problem}\label{subsec:nonlinear_problem}
	
	We now consider the non-linear formulation
	\eqref{eq:vms_momentum}-\eqref{eq:vms_pressure_gradient_subscale}. The main difficulty is not the construction of a discrete solution itself, since the discrete problem is finite-dimensional, but rather the derivation of estimates with the same order of convergence as the interpolation error. For this reason, the proof is organised as a fixed-point argument in a stabilised norm that includes the dynamic pressure-gradient contribution, {but not the convective term. Obtaining stability in a norm that provides some control of both pressure and convection in stabilised finite element methods is rare; nevertheless, we will establish a result in this direction in Section~\ref{sec:pressure_convection_estimate}.}
	
	The fixed-point map is built from the linearised problem analysed above, and the central task is to prove that it maps a ball of optimal radius into itself.
	
	We will use the same procedure proposed by Ervin \& Miles \cite{ErvinMiles2003}. The proof can be divided into the following four steps:
	\begin{enumerate}
		\item 
		Define an iterative mapping such that the fixed-point is the solution to the original problem.
		\item Show that the mapping is well-defined and bounded on bounded sets. 
		\item Show that there exists an invariant ball of the mapping.
		\item Apply {Brouwer's  fixed-point} theorem to conclude that a discrete solution exists on this ball.
	\end{enumerate}
	
	\begin{assumption}\label{ass:regular_solution}
		The system \eqref{eq:weak_momentum}-\eqref{eq:weak_continuity} admits a solution $(\bu,p)$, continuous in time and satisfying
		\begin{align}
			\sup_{0\le t\le T}\double{\bu}_{\infty}
			&\le D_1,
			&
			\sup_{0\le t\le T}\double{\grad\bu}_{\infty}
			&\le D_2,
			&
			\sup_{0\le t\le T}\double{\bu}_{k+1}
			&\le D_3,
			\nonumber\\
			\sup_{0\le t\le T}\double{p}_{k}
			&\le D_4,
			&
			\sup_{0\le t\le T}\double{\partial_t\bu}_{k}
			&\le D_5,
			\label{eq:regularity_assumption}
		\end{align}
		for certain positive constants $D_i$, $i=1,\ldots,5$, which are assumed to be sufficiently small.
	\end{assumption}
	
	As stated in Assumption~\ref{ass:regular_solution}, we assume that a solution to the problem exists. The existence theory for this type of problem has been studied extensively. We refer, in particular, to the recent work \cite{Bulicek2023}, where an existence framework applicable to a wide class of models, including those considered here, is developed.

	\begin{theorem}[Existence and optimal a priori error estimate]\label{thm:nonlinear_existence_error} 
		Suppose that Assumptions~\ref{ass:viscosity_lipschitz}, \ref{ass:viscosity_bound}, and \ref{ass:regular_solution} hold, and that $\bff\in L^1(0,T;H^{-1}(\Omega)^d)$. Then, if $\mu_\infty$ is sufficiently large and the constants $D_1,\ldots,D_5$ in Assumption~\ref{ass:regular_solution} are small enough, there exists a solution of \eqref{eq:vms_momentum}-\eqref{eq:vms_pressure_gradient_subscale} satisfying
		\begin{align}
			\sup_{0\le t\le T}\double{\bu-\bu_h}^2+\sup_{0\le t\le T}\double{\tilde\bu_1}^2+\sup_{0\le t \le T}\double{\tilde\bu_2}^2+\mu_\infty\int_0^T\double{\grad (\bu-\bu_h)}^2\textrm{d}t\le \bu_\star h^{2k},& \label{Error-Bound-1}\\
			\int_0^T\tau_1^{-1}\double{\tilde\bu_2}^2\textrm{d}t+\int_0^T \tau_1\double{\grad(p-p_h)-\partial_t\tilde\bu_2}^2\le p_\star h^{2k},\label{Error-Bound-2}
		\end{align}
		where $\bu_\star,p_\star$ are appropriate dimensional factors that render the estimates dimensionally consistent, and independent of $h$.
	\end{theorem}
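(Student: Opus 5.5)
We follow the four-step Ervin--Miles programme announced above, with the linearised problem of Section~\ref{subsec:linearised_problem} as the building block. Let $\widehat{\BU}_h=(P_{u,0}[\bu],P_p[p],\boldsymbol 0,\boldsymbol 0)$ be the projected exact solution. Define the ball
\[
\mathcal{B}:=\Big\{\bw_h\in C([0,T];\LV_h):\ \sup_t\double{\bu-\bw_h}^2+\mu_\infty\int_0^T\double{\grad(\bu-\bw_h)}^2\,\mathrm{d}t\le \bu_\star h^{2k}\Big\}.
\]
The map $\mathcal{M}:\widehat{\bu}_h\mapsto\bu_h$ sends $\widehat{\bu}_h\in\mathcal{B}$ to the velocity component of the solution of \eqref{eq:linear_momentum}--\eqref{eq:linear_pressure_gradient_subscale}. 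Both $\tau_1,\tau_2$ and the viscosity $\mu(|\grad^s\widehat{\bu}_h|)$ are frozen at $\widehat{\bu}_h$. By the existence and uniqueness theorem for the linearised problem, $\mathcal{M}$ is well defined. By Theorem~\ref{thm:linear_stability}, it is bounded on bounded sets. Continuity of $\mathcal M$ follows from the continuous dependence of a finite-dimensional linear ODE on its Lipschitz coefficients, using Assumption~\ref{ass:viscosity_lipschitz} and the continuity of $\tau_1$ in $\double{\widehat{\bu}_h}_\infty$. A fixed point of $\mathcal M$ solves \eqref{eq:vms_momentum}--\eqref{eq:vms_pressure_gradient_subscale}. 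Brouwer's theorem applies once invariance of the ball (a closed convex set in a finite-dimensional space for each time, handled on the trajectory space as in \cite{ErvinMiles2003}) is shown.

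\textbf{Invariance (error equation).} Split $\bu-\bu_h=\boldsymbol\eta+\boldsymbol e_h$ with $\boldsymbol\eta=\bu-P_{u,0}[\bu]$, and $p-p_h=\eta_p+e_p$ with $\eta_p=p-P_p[p]$. Subtract the linearised scheme from the weak form \eqref{eq:weak_momentum}, tested with discrete functions. This gives a problem for $(\boldsymbol e_h,e_p,\tilde\bu_1,\tilde\bu_2)$ with the same left-hand side as \eqref{eq:linear_momentum}--\eqref{eq:linear_pressure_gradient_subscale}, and the following consistency/interpolation right-hand side:
\begin{itemize}
\item $(\partial_t\boldsymbol\eta,\bv_h)$, which vanishes by orthogonality;
\item the viscous term $(2\mu(|\grad^s\bu|)\grad^s\bu-2\mu(|\grad^s\widehat\bu_h|)\grad^s P_{u,0}\bu,\grad^s\bv_h)$;
\item the convective term $\overline c(\bu,\bu,\bv_h)-\overline c(\widehat\bu_h,P_{u,0}\bu,\bv_h)$;
\item $(\eta_p,\div\bv_h)$;
\item the stabilising terms applied to $\widehat{\BU}_h$, namely $\tau_2(\div\bv_h,P_p^\perp\div P_{u,0}\bu)$, and the subscale forcings $\overline c(\widehat\bu_h,P_{u,0}\bu,\tilde\bv_1)$ and $(\grad P_p p,\tilde\bv_2)$.
\end{itemize}

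The viscous difference is split as $\mu(|\grad^s\widehat\bu_h|)\grad^s\boldsymbol\eta$ plus $(\mu(|\grad^s\bu|)-\mu(|\grad^s\widehat\bu_h|))\grad^s\bu$. The second piece is bounded by $C_\mu D_2\double{\grad(\bu-\widehat\bu_h)}$, which is $O(h^k)$ on $\mathcal B$. The convective difference is handled with \eqref{eq:convective_bound_Linfty} and \eqref{eq:convective_bound_H1}, using $D_1,D_2$ and \eqref{eq:projection_W1infty} to keep $\double{\widehat\bu_h}_\infty$ bounded; for the latter one needs an inverse estimate, hence a mild requirement $h^{k-d/2}$ small. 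The remaining terms are $O(h^k)$ by \eqref{eq:interpolation_estimate} and the scalings $\tau_2\lesssim\mu_0$, $\tau_1\lesssim h^2/\mu_\infty$.

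Testing with the same combination $\BV_h^*$ as in the proof of Theorem~\ref{thm:linear_stability} and integrating in time yields
\[
\sup_t\double{\boldsymbol e_h}^2+\mu_\infty\int_0^T\triple{(\boldsymbol e_h,e_p,\tilde\bu_1,\tilde\bu_2)}_W^2
\lesssim \frac{1}{\mu_\infty}\big(C(D_i)\,h^{2k}+C_\mu^2D_2^2\,\bu_\star h^{2k}\big).
\]
The second contribution is the feedback from $\widehat\bu_h\in\mathcal B$. Choosing $\mu_\infty$ large and $D_i$ small makes its coefficient below $1/2$, so $\mathcal M(\mathcal B)\subset\mathcal B$ for suitable $\bu_\star$. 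This gives \eqref{Error-Bound-1}, after adding back the $\boldsymbol\eta$ terms.

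\textbf{Main obstacle: \eqref{Error-Bound-2}.} The delicate term is the pressure-gradient subscale forcing $(\grad P_p p,\tilde\bv_2)=-(\grad\eta_p,\tilde\bv_2)$, which uses $\tilde\bv_2\perp\LV_h$ and $P_u^\perp\grad p\approx$ small. It is only $O(h^{k-1})$ unless it is absorbed by $\tau_1^{-1}\double{\tilde\bu_2}^2$. Since $\tau_1^{-1}\sim\mu_c/h^2$, Young's inequality gives $\tau_1\double{\grad\eta_p}^2\lesssim h^2 h^{2k-2}$, which is $O(h^{2k})$. This is exactly where the dynamic subscale is used. From \eqref{eq:linear_pressure_gradient_subscale} we get
\[
\widetilde P_2[\partial_t\tilde\bu_2+\grad p_h]=-\tau_1^{-1}\tilde\bu_2,
\]
so $\tau_1^{1/2}\widetilde P_2[\grad(p-p_h)-\partial_t\tilde\bu_2]$ equals $\tau_1^{-1/2}\tilde\bu_2$ plus $\tau_1^{1/2}\widetilde P_2\grad p$, both already controlled. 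The $P_{u,0}$-component is recovered from the $\triple{\cdot}_W$ control of the projected acceleration--convection--pressure balance, as in the end of the proof of Theorem~\ref{thm:linear_stability}. One subtracts the convective and $\partial_t\boldsymbol e_h$ parts, which are bounded via inverse inequalities times $\tau_1^{1/2}$ and the already established velocity bounds, and uses \eqref{eq:interpolation_condition} to combine the two components. Note that no large-$T$ argument is needed, because $\partial_t\tilde\bu_2$ sits inside the controlled quantity; this is the mechanism that a quasi-static formulation lacks.
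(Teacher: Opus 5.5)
Your fixed-point frame and the velocity estimate are broadly in line with the paper. The step meant to deliver \eqref{Error-Bound-2}, however, has a genuine gap.

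The $\widetilde P_2$-component of $\grad(p-p_h)-\partial_t\tilde\bu_2$ is fine. For the $P_{u,0}$-component, your $\triple{\cdot}_W$-type bound only controls $\int_0^T\tau_1\double{P_{u,0}[\partial_t\boldsymbol e_h+\mathcal N(\widehat\bu_h,\boldsymbol e_h)+\grad e_p]}^2$. You then propose to strip off $\partial_t\boldsymbol e_h$ ``via inverse inequalities times $\tau_1^{1/2}$ and the already established velocity bounds''. Those bounds are only $\sup_t\double{\boldsymbol e_h}$ and $\mu_\infty\int_0^T\double{\grad\boldsymbol e_h}^2$. An inverse inequality acts in space and gives nothing on $\int_0^T\tau_1\double{\partial_t\boldsymbol e_h}^2$. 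The projected momentum equation determines only the sum $\partial_t\boldsymbol e_h+P_{u,0}[\grad e_p]$, so separating the two from it is circular.

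A related omission: your list of consistency terms misses the one from the continuity equation, $(q_h,\div\boldsymbol\eta)$. With $q_h=e_p$ it becomes $(\grad e_p,\boldsymbol\eta)$, the paper's $R_3$ (in the paper's notation $E=\boldsymbol e_h$, $G=e_p$, $\Lambda=\boldsymbol\eta$). Interpolation estimates alone cannot close this term, since it needs weighted control of $\grad e_p$ itself.

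The missing idea is the discrete continuity equation \eqref{eq:linear_continuity} differentiated in time, $(\div\partial_t\bu_h,q_h)=(\grad q_h,\partial_t\tilde\bu_2)$. It gives $(\partial_t\boldsymbol e_h,\grad e_p)=(\partial_t\tilde\bu_2,P_u^\perp[\grad e_p])-(\partial_t\boldsymbol\eta,\grad e_p)$. The paper uses it as follows:
\begin{itemize}
\item It tests the error equation with $\bv_h=E+\gamma\tau_1P_{u,0}[\grad G]$, $q_h=G$, $\tilde\bv_i=-\tilde\bu_i$.
\item The identity turns the acceleration term $R_{15}$ into $-\gamma\tau_1(\partial_t\tilde\bu_2,P_u^\perp[\grad G])$ plus interpolation errors.
\item $R_3$ is split as $(\Lambda,\grad G-\partial_t\tilde\bu_2)+(\Lambda,\partial_t\tilde\bu_2)$, and Young's inequality on the second piece produces $\frac{\gamma\tau_1}{2}\double{\partial_t\tilde\bu_2}^2$.
\item Together with $\frac{\gamma\tau_1}{2}\double{P_u^\perp[\grad G]}^2$, these form the square $\frac{\gamma\tau_1}{2}\double{\partial_t\tilde\bu_2-P_u^\perp[\grad G]}^2$, which is absorbed by $Q(t)$.
\end{itemize}
That cancellation is where the dynamic subscale is actually used and why no large-time assumption is needed. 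The mere fact that $\partial_t\tilde\bu_2$ sits inside the controlled quantity is not enough.

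Your route can be repaired with the same identity. Add $\tau_1\double{\partial_t\boldsymbol e_h+P_{u,0}[\grad e_p]}^2$ to the already controlled $\tau_1\double{P_u^\perp[\grad e_p]-\partial_t\tilde\bu_2}^2$. The mixed terms then cancel up to $\tau_1(\partial_t\boldsymbol\eta,\grad e_p)$, which leaves $\tau_1\big(\double{\partial_t\boldsymbol e_h}^2+\double{P_{u,0}[\grad e_p]}^2+\double{P_u^\perp[\grad e_p]}^2+\double{\partial_t\tilde\bu_2}^2\big)$ controlled. Without this identity, the argument does not close.
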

	\begin{proof} 
		We proceed according to the four steps described at the beginning of this section.
		
		\textbf{Step 1:}  \textit{The iterative mapping.} First we define the following iterative mapping: \\
		Let $\LW_h=L^2(0,T;\LV_h)\times L^2(0,T;Q_h)\times L^2(0,T;\tilde\BV_1)\times L^2(0,T;\tilde\BV_2)$, then
		$$\delta_h:\LW_h\to \LW_h,\hspace{0.5cm}(\hat\bu_h,\hat p_h,\hat\bu_1,\hat\bu_2)\mapsto(\bu_h,p_h,\tilde\bu_1,\tilde\bu_2),$$
		where $\BU_h=(\bu_h,p_h,\tilde\bu_1,\tilde\bu_2)$ satisfies \eqref{eq:linear_momentum}-\eqref{eq:linear_pressure_gradient_subscale}, for all $\BV_h=(\bv_h,q_h,\tilde\bv_1,\tilde\bv_2)\in\LX_h$ a.e. in $(0,T)$. It is clear that a fixed-point is a solution of the approximation system (\ref{eq:vms_momentum})-(\ref{eq:vms_pressure_gradient_subscale}). 
		
		\textbf{Step 2:} \textit{The iterative map is well defined and bounded on bounded sets. }The existence and uniqueness
		result proved in Subsection~\ref{subsec:linear_wellposedness}, shows that the iterative map $\delta_h$ is well-defined.
		In addition, the stability result, proved in Subsection~\ref{subsec:linear_stability}, ensures that the linearised problem is stable and {proves 
			a uniform bound on the solution in terms of the data only}. This step is crucial in the definition of the fixed-point mapping and implies, in particular, that the mapping $\delta_h$ is bounded on bounded sets.
		
		\textbf{Step 3:} \textit{Existence of an invariant ball.} We begin by defining an invariant ball. Let $R=h^k$, and for $\BV=(\bv,q,\tilde\bv_1,\tilde\bv_2)\in\LW_h$ we define the norm:
		\begin{equation}
			\double{\BV}_{B}^2:=\sup_{0\le t\le T}\double{\bv}^2+\int_0^T\left\{\mu_\infty\double{\grad\bv}^2+\tau_1^{-1}\double{\tilde\bv_1}^2+\frac{\tau_1^{-1}}{2}\double{\tilde\bv_2}^2\right\}\mathrm{d}t+\int_0^T\frac{\tau_1}{2}\double{\grad q-\partial_t\tilde\bv_2}^2\mathrm{d}t. \label{Norma-B}
		\end{equation}

		Let us now define the ball $\LB$ as follows,
		\begin{equation}
			\LB=\left\{\BV_h=(\bv_h,q_h,\tilde\bv_1,\tilde\bv_2):(0,T)\to\LW_h\,\,\,\text{such that }\double{\BV_h-\BU}_{B}\le\BU_\star R\right\}
		\end{equation}
		where {$\BU:=(\bu,p,0,0)$ is a solution of the continuous solution problem \eqref{eq:strong_momentum}-\eqref{eq:strong_initial}}, and $\BU_\star^2:=\bu^2_\star+p^2_\star$ will be built in the course of the proof.

		Let now $\hat{\BU}_h=(\hat{\bu}_h,\hat{p}_h,\hat\bu_1,\hat\bu_2)\in\LB$ be arbitrary, and let $(\bu_h,p_h,\tilde\bu_1,\tilde\bu_2)=\delta_h(\hat{\bu}_h\hat p_h,\hat\bu_1,\hat\bu_2)$ satisfying \eqref{eq:linear_momentum}-\eqref{eq:linear_pressure_gradient_subscale}. Then, subtracting the continuous and discrete problems we derive the following error equation:
		\begin{align}
			(\partial_t(\bu-\bu_h),\bv_h)+[\overline c(\bu,\bu,\bv_h)-\overline c(\hat\bu_h,\bu_h,\bv_h)] \nonumber\\
			+2(\mu(|\grad^s\bu|)\grad^s\bu-\mu(|\grad^s\hat\bu_h|)\grad^s\bu_h,\grad^s\bv_h)-(p-p_h,\div\bv_h)+(\tilde\bu_1,\mathcal{N}(\hat\bu_h,\bv_h))\nonumber\\
			-\tau_2(P_p^\perp[\div\bu_h],\div\bv_h)+(\div(\bu-\bu_h),q_h)+(P^\perp[\grad q_h],\tilde\bu_2)-(\partial_t\tilde\bu_1,\tilde\bv_1) \nonumber\\
			-\tau_1^{-1}(\tilde\bu_1,\tilde\bv_1)-\overline{c}(\hat\bu_h,\bu_h,\tilde\bv_1)-(\partial_t\tilde\bu_2,\tilde\bv_2)-\tau_1^{-1}(\tilde\bu_2,\tilde\bv_2)-(\grad p_h,\tilde\bv_2)&=0, \label{eq:Error}
		\end{align}
		for every test function $\BV_h=(\bv_h,q_h,\tilde\bv_1,\tilde\bv_2)$.
		
		Let us now define the approximation and interpolation errors:
		\begin{eqnarray}
			\Lambda = \bu-\mathcal{U}&\text{ and }&E=\mathcal{U}-\bu_h, \nonumber\\
			\Pi = p-\mathcal{P}&\text{ and }&G=\mathcal{P}-p_h
		\end{eqnarray}
		where $\mathcal{U}=P_{u,0}[\bu]$, $\mathcal{P}=P_p[p]$. In addition we define $e_u=\bu-\bu_h=\Lambda+E$ and $e_p=p-p_h=\Pi+G$.
		
		The proof will consist in proving that there exists a  $\BU_h$, solution  of \eqref{eq:vms_momentum}-\eqref{eq:vms_pressure_gradient_subscale}, that belongs to the ball $\LB$. Let $P[\BU]=(\mathcal{U},\mathcal{P},\boldsymbol{0,\boldsymbol{0}})$. By \eqref{eq:Error} we shall prove that
		\begin{equation}
			\double{P[\BU]-\BU_h}^2_{B}\le\varphi_1(D)\double{\BU-\hat\BU_h}_{B}^2+\double{\BU-P[\BU]}_I^2,\label{eq:desigualdadobjetivo}
		\end{equation}
		where $\varphi_1(D)$ is a certain polynomial function written in terms of the components of the vector of constants $D=(D_1,...,D_5)$ introduced in Assumption~\ref{ass:regular_solution} and the norm $\double{\cdot}_I$ is associated with the interpolation error and will be built over the proof. Thanks to the properties of interpolation, we shall check that
		\begin{equation}
			\double{\BU-P[\BU]}_I^2\le\varphi_2(D)h^{2k}, \label{eq:errorInterpolacion1}
		\end{equation}
		and  that
		$$\double{\BU-P[\BU]}_{B}^2\le\varphi_3(D)h^{2k}.$$
		Using the triangle inequality, the previous inequalities, and the fact that $\hat\BU_h\in\LB$, we have that
		\begin{eqnarray}
			\double{\BU-\BU_h}_{B}^2&\le&\double{P[\BU]-\BU_h}_{B}^2+\double{\BU-P[\BU]}_{B}^2 \nonumber\\
			&\le& \varphi_1(D)\BU_\star^2h^{2k}+\varphi_2(D)h^{2k}+\varphi_3(D)h^{2k}.\nonumber
		\end{eqnarray}
		By taking the components of $D$ small enough, we will be able to guarantee that $\double{\BU-\BU_h}^2_{B}
		\le \BU^2_\star h^{2k}$. This means that $\BU_h\in\mathcal{B}_h$. Therefore the goal is to prove \eqref{eq:desigualdadobjetivo} and check that \eqref{eq:errorInterpolacion1} holds. The constant $\BU_\star$ can be taken as $\BU_\star^2=c(\varphi_2(D)+\varphi_3(D))$, with $c>1$ whenever $D$ is such that $\varphi_1(D)+c^{-1}\le1$.
		
		In order to motivate/explain the appearance of the anisotropic norm, we start noticing that \eqref{eq:pressure_gradient_subscale_pointwise} leads to
		\begin{align}
			\tau_1^{-1}(\tilde\bu_2,\tilde\bu_2) &=\frac{1}{2}\tau_1^{-1}\double{\tilde\bu_2}^2+\frac{1}{2}\tau_1^{-1}\double{\tilde\bu_2}^2 \nonumber\\
			&= \frac{1}{2}\tau_1^{-1}\double{\tilde\bu_2}^2+\frac{1}{2}\left(\tau_1^{-1}\double{\tilde\bu_2+\tau_1P_u^\perp[\grad\mathcal{P}]}^2-\tau_1^{-1}2(\tilde\bu_2,\tau_1P_u^\perp[\grad\mathcal{P}])\right. \nonumber\\
			&\quad\left.-\tau_1^{-1} \double{\tau_1P_u^\perp[\grad\mathcal{P}]}^2\right)\nonumber\\
			&=\frac{1}{2}\tau_1^{-1}\double{\tilde\bu_2}^2+\frac{1}{2}\left(\tau_1\double{\partial_t\tilde\bu_2-P_u^\perp[\grad p_h]+P_u^\perp[\grad \LP]}^2\right)-R_9(t) \nonumber\\
			&= \frac{1}{2}\tau_1^{-1}\double{\tilde\bu_2}^2+\frac{1}{2}\left(\tau_1\double{\partial_t\tilde\bu_2-P_u^\perp[\grad G]}^2\right)-R_9(t), 
		\end{align}
		where $R_9(t)$ will be defined later.
		
		Then, we test  the error equation \eqref{eq:Error} with 
		$$\bv_h=E+\gamma\tau_1P_{u,0}[\grad G],\quad q_h=G,\quad\tilde\bv_1=-\tilde\bu_1,\quad\tilde\bv_2=-\tilde\bu_2,$$
		where $\gamma>0$ will be chosen small enough at a later stage,  and obtain
		\begin{align}
			\frac{1}{2}\frac{\textrm{d}}{\textrm{d}t}\left(\double{E}^2+\double{\tilde\bu_1}^2+\double{\tilde\bu_2}^2\right)+Q(t)=N(t)+\sum_{j=1}^{15}R_j(t), \label{eq:error_equality}
		\end{align}
		where
		\begin{align}
			Q(t)=&\,\,2\mu_\infty\double{\grad^sE}^2+\tau_1^{-1}\double{\tilde\bu_1}^2+\frac{\tau_1^{-1}}{2}\double{\tilde\bu_2}^2+\tau_1\left(\frac{1}{2}\double{P_u^\perp[\grad G]-\partial_t\tilde\bu_2}^2+\gamma\double{P_{u,0}[\grad G]}^2\right), \nonumber\\
			N(t)=&-\left\{\overline{c}(\bu,\bu,E)-\overline{c}(\hat\bu_h,\bu_h,E)\right\}-\gamma\tau_1\left\{\overline{c}(\bu,\bu,P_{u,0}[\grad G])-\overline{c}(\hat\bu_h,\bu_h,P_{u,0}[\grad G])\right\}, \nonumber
		\end{align}
		and the remainders $R_j(t)$ will be defined and bounded  below. 
		
		Note that from the stability result \eqref{eq:interpolation_condition}, we have that
		\begin{align}
			\double{\grad G-\partial_t\tilde\bu_2}^2\lesssim&\,\,\double{P_{u,0}[\grad G-\partial_t\tilde\bu_2]}^2+\double{P_u^\perp[\grad G-\partial_t\tilde\bu_2]}^2 \nonumber\\
			=&\,\,\double{P_{u,0}[\grad G]}^2+\double{P_u^\perp[\grad G]-\partial_t\tilde\bu_2}^2. \label{eq:pressure_error_control}
		\end{align}
		Then, using \eqref{eq:error_equality},  \eqref{eq:pressure_error_control}, and the definition \eqref{Norma-B} of the norm $\double{\cdot}_B$  we obtain
		\begin{align}
			\double{P[\BU]-\BU_h}_B^2\lesssim\sup_{0\le t\le T}\double{E}^2+\int_0^TQ(t)\,\,\mathrm{d}t\lesssim\int_0^T\left(N(t)+\sum_{j=1}^{15}R_j(t)\right)\mathrm{d}t. \nonumber
		\end{align}
		The objective is to see that all these terms in the RHS can be bounded as indicated in \eqref{eq:desigualdadobjetivo} or absorbed by the LHS. We do not explicitly specify the values of the dimensionless constants that appear in this process, denoted by $\varepsilon$ and $\xi$, and assume that they are chosen sufficiently small. 
		
		We now start bounding each of the $R_j(t)$. Using the $L^2(\Omega)$-orthogonality between $\Lambda$ and $E$, Young's inequality, and $|\div \bv|\le \sqrt{d}|\grad^s\bv|$, we have that
		\begin{align}
			R_1(t)&=(\partial_t\Lambda,E)=0, \nonumber\\
			R_2(t)&=(\Pi,\div E)\le \mu_\infty\varepsilon\double{\grad^s E}^2+\frac{d}{2\mu_\infty\varepsilon}\double{\Pi}^2. \nonumber
		\end{align}
		
		The term involving the discrete pressure error requires special treatment. Using (\ref{eq:interpolation_condition}) and the inequality $(a+b)^2\le 2a^2+2b^2$, we have,
		\begin{align}
			R_3(t)&= -(\div\Lambda, G)=(\Lambda,-\partial_t\tilde\bu_2+\grad G)+(\Lambda,\partial_t\tilde\bu_2) \nonumber\\
			&\le \tau_1^{-1}\frac{1}{2\varepsilon}\double{\Lambda}^2+\frac{\varepsilon\tau_1}{\beta_0^2}\left(\double{P_u^\perp[\partial_t\tilde\bu_2-\grad G]}^2+\double{P_{u,0}[\partial_t\tilde\bu_2-\grad G]}^2\right) +(\Lambda,\partial_t\tilde\bu_2).\nonumber
		\end{align}
		Now, for the last term above we use Young's inequality to get to
		\begin{align}
			(\Lambda,\partial_t\tilde\bu_2)&\le \frac{1}{2\tau_1\gamma}\double{\Lambda}^2+\frac{\gamma\tau_1}{2}\double{\partial_t\tilde\bu_2}^2. \label{derivada subescala2}
		\end{align}
		The term $S(t)=\frac{\gamma\tau_1}{2}\double{\partial_t\tilde\bu_2}^2$ will be analysed when bounding the term $R_{14}(t)$ below.
		
		The following term only needs Cauchy--Schwarz and Young's inequalities
		\begin{align}
			R_4(t)&=-2\mu_\infty(\grad^s\Lambda,\grad^sE)\le \mu_\infty\varepsilon\double{\grad^sE}^2+\frac{\mu_\infty}{\varepsilon}\double{\grad^s\Lambda}^2.
		\end{align}
		
		Recalling the decomposition of the apparent viscosity \eqref{eq:mu_phi_decomposition}, the following term corresponds to the non-linear part of the viscous term:
		\begin{align}
			R_5(t)&=-2(\phi(|\grad^s\bu|)\grad^s\bu-\phi(|\grad^s\hat\bu_h|)\grad^s\bu_h,\grad^s E) \nonumber\\
			&= -2(\left\{\phi(|\grad^s\bu|)-\phi(|\grad^s\hat\bu_h|)\right\}\grad^s\bu,\grad^s E)-2(\phi(|\grad^s\hat\bu_h|)\grad^s(\bu-\bu_h),\grad^s E). \nonumber
		\end{align}
		Using the Lipschitz continuity of $\phi$ stated in Assumption \ref{ass:viscosity_lipschitz}, together with the Cauchy--Schwarz' and Young's inequalities, and the bounds for $\phi$ given in \eqref{eq:phi_bounds}, we obtain
		\begin{align}
			R_5(t)&\le 2C_\phi D_2\double{\grad^s(\bu-\hat\bu_h)}\double{\grad^s E}\underbrace{-2(\phi(|\grad^s\hat\bu_h|)\grad^sE,\grad^sE)}_{\le 0}-2(\phi(|\grad^s\hat\bu_h|)\grad^s\Lambda,\grad^sE)  \nonumber\\
			&\le C\left\{\frac{D_2}{\mu_\infty\varepsilon}\double{\grad^s(\bu-\hat\bu_h)}^2+D_2\varepsilon\mu_\infty\double{\grad^sE}^2\right\}+2(\mu_0-\mu_\infty)\double{\grad^s\Lambda}\double{\grad^sE}. \label{eq:R4}
		\end{align}
		We can note that the first term of \eqref{eq:R4} falls in the structure \eqref{eq:desigualdadobjetivo}. For the second one we use Young's inequality as follows:
		\begin{align}
			\double{\grad^s\Lambda}\double{\grad^sE}\lesssim\mu_\infty\varepsilon\double{\grad^s E}^2+\frac{1}{\mu_\infty\varepsilon}\double{ \grad^s\Lambda}^2.
		\end{align}

		Taking the parameter $\varepsilon$ sufficiently small, it can be shown that the terms appearing in the bounds of  $R_1,...,R_5$ can either be absorbed by $Q(t)$ on the LHS of \eqref{eq:error_equality},  or are interpolation errors that behave as indicated in (\ref{eq:errorInterpolacion1}).
		
		Let us now discuss the right-hand-side terms associated with the stabilisation. Using Cauchy--Schwarz and Young's inequalities, the following terms can be easily controlled:
		\begin{align}
			R_6(t) &= (\tilde\bu_1,P_u^\perp[\mathcal{N}(\hat\bu_h,\mathcal{U})])\le\varepsilon\tau_1^{-1}\double{\tilde\bu_1}^2+\frac{\tau_1}{\varepsilon}\double{P_u^\perp[\mathcal{N}(\hat\bu_h,\mathcal{U})]}^2,\nonumber\\
			R_7(t) &= \tau_2(P_p^\perp[\div\mathcal{U}],\div E), \nonumber\\
			R_8(t) &= (\tilde\bu_2,P_u^\perp[\grad\mathcal{P}])\lesssim \varepsilon\tau_1^{-1}\double{\tilde\bu_2}^2+\frac{\tau_1}{\varepsilon}\double{P_u^\perp[\grad\mathcal{P}]}^2, \nonumber\\
			R_9(t)&= 2(\tilde\bu_2,P_u^\perp[\grad\mathcal{P}])+\tau_1\double{P_u^\perp[\grad\mathcal{P}]}^2.
		\end{align}
		Some of these terms need further treatment. Let us first consider $R_6(t)$,
		\begin{align}
			\tau_1\double{P_u^\perp[\mathcal{N}(\hat\bu_h,\mathcal{U})]}^2 \lesssim&\,\,
			\tau_1\double{P_u^\perp[\mathcal{N}(\hat\bu_h-\bu,\mathcal{U}-\bu)]}^2+\tau_1\double{P_u^\perp[\mathcal{N}(\bu,\mathcal{U}-\bu)]}^2\nonumber\\
			&+\tau_1\double{P_u^\perp[\mathcal{N}(\bu,\bu)]}^2+\tau_1\double{P_u^\perp[\mathcal{N}(\hat u_h-\bu,\bu)]}^2\nonumber
			\\
			\lesssim&\,\,\tau_1\double{P_u^\perp[(\hat\bu_h-\bu)\cdot\grad(\mathcal{U}-\bu)]}^2+\tau_1\double{P_u^\perp[\bu\cdot\grad(\mathcal{U}-\bu)]}^2 \nonumber\\
			&+\tau_1\double{P_u^\perp[\bu\cdot\grad\bu]}^2+ \tau_1\double{P_u^\perp[(\hat\bu_h-\bu)\cdot\grad\bu]}^2 \nonumber \\
			&+\tau_1\double{P_u^\perp[(\div(\hat\bu_h- \bu))\cdot(\mathcal{U}-\bu)]}^2+\tau_1\double{P_u^\perp[\div(\hat\bu_h-\bu)\cdot\bu]}^2,
		\end{align}
		where the six terms on the RHS can be shown to have the structure given by \eqref{eq:desigualdadobjetivo}. For example, using the inequality \eqref{eq:projection_W1infty}, we get
		\begin{align*}
			\tau_1
			\left\|P_u^\perp\!\left[(\hat{\bu}_h-\bu)\cdot\nabla(\bu-\LU)\right]\right\|^2
			&\lesssim
			\tau_1
			\|\hat{\bu}_h-\bu\|^2
			\|\nabla(\bu-\LU)\|_\infty^2
			\\
			&\lesssim
			\tau_1 \|\hat{\bu}_h-\bu\|^2
			\|\bu\|_{1,\infty}^2.
		\end{align*}
		The terms $R_7(t)$, $R_8(t)$ and $R_9(t)$ can be bounded by terms that can be absorbed by $Q(t)$ as well as by terms involving the orthogonal projection of an operator applied to the projection onto the finite element space of the continuous solution. For example, for $R_7(t)$
		$$ \double{P_p^\perp[\div \mathcal{U}]}^2\lesssim\double{P_p^\perp[\div(\mathcal{U}-\bu)]}^2+\double{P_p^\perp[\div\bu]}^2\lesssim\double{\bu}_{k+1}^2h^{2k}. $$

		The remaining terms are multiplied by $\gamma$. The first of these can be bounded by the Cauchy--Schwarz and Young's inequalities, as well as the inverse inequality
		\begin{eqnarray}
			R_{10}(t)&=& \gamma \tau_1(\Pi,\div P_{u,0}[\grad G])\le \gamma\tau_1\double{\grad\Pi}\cdot\double{P_{u,0}[\grad G]} \nonumber\\
			&\le& \frac{\gamma}{2\mu_\infty\xi_1}\double{\Pi}^2+\tau_1^2\frac{\gamma \mu_\infty c_{inv}\xi_1}{2h^2}\double{P_{u,0}[\grad G]}^2. \nonumber
		\end{eqnarray}
		The following term also requires special treatment:
		\begin{align}
			R_{11}(t)=&\,\, -2\gamma\tau_1\mu_\infty(\grad^s(\bu-\bu_h),\grad^s(P_{u,0}[\grad G])) \nonumber\\
			\lesssim&\,\, \gamma\tau_1 \mu_\infty\left(\double{\grad^sE}+\double{\grad^s\Lambda}\right)\double{\grad^s(P_{u,0}[\grad G])}. \label{eq:R10}
		\end{align}
		For the last term, we use inverse inequality and the expression of the stabilised coefficient $\tau_1$. Now for the first term of \eqref{eq:R10}, we use Young's and inverse inequalities to get
		\begin{align}
			\gamma\tau_1\mu_\infty\double{\grad^sE}\double{\grad^s(P_{u,0}[\grad G])}\lesssim \xi\mu_\infty\double{\grad^sE}^2+\tau_1\gamma\frac{\tau_1\mu_\infty\gamma}{h^2\xi}\double{P_{u,0}[\grad G]}^2. \nonumber
		\end{align}
		To see that the bounds obtained fall within the structure \eqref{eq:desigualdadobjetivo}, we have to use the definition of the stabilisation coefficient $\tau_1$ to check that $\mu_\infty\tau_1h^{-2}\lesssim c_1^{-1}$ and take $\gamma$ small enough to balance $\xi^{-1}$, which can be large.
		
		The next term is associated with the non-linear part of the diffusive term:
		\begin{align}
			R_{12}(t)=&\,\,-\gamma\tau_1(\phi(|\grad^s\bu|)\grad^s\bu-\phi(|\grad^s\hat\bu_h|)\grad^s\bu_h,\grad^s P_{u,0}[\grad G]) \nonumber\\
			=&\,\,-\gamma\tau_1(\{\phi(|\grad^s\bu|)-\phi(|\grad^s\hat\bu_h|)\}\grad^s\bu,\grad^sP_{u,0}[\grad G])\nonumber\\
			&\,\,+\gamma\tau_1(\phi(|\grad^s\hat\bu_h|)\grad^s(\bu-\bu_h),\grad^sP_{u,0}[\grad G]). \nonumber
		\end{align}
		All these terms can be bounded using a similar strategy as for $R_5(t)$ and using the expression of the stabilisation coefficient $\tau_1$. In this case we take $\gamma$ small enough to balance $\xi^{-1}$, as before.

		The following terms to consider come from stabilisation
		\begin{align*}
			R_{13}(t)&=\gamma\tau_1(\tilde\bu_1,\mathcal{N}(\hat\bu_h,P_{u,0}[\grad G])),\\
			R_{14}(t)&= \gamma\tau_1\tau_2(P_p^\perp[\div(\bu_h-\mathcal{U}+\mathcal{U})],\div P_{u,0}[\grad G]).
		\end{align*}
		These terms can be bounded using arguments analogous to those already used.
		
		The next term requires some elaboration. Observe that,
		$$(\partial_t(\bu-\bu_h),P_{u,0}[\grad G])=(\partial_t\bu,P_{u,0}[\grad G])+(\partial_t\div\bu_h,G).$$
		Since $\bu$ is divergence-free and vanishes on the boundary, we have
		\begin{align}
			\gamma\tau_1(\partial_t\bu,P_{u,0}[\grad G])&=\gamma\tau_1(\partial_t\bu,P_{u,0}[\grad G]-\grad G)=-\gamma\tau_1\left(P_{u,0}^\perp[\partial_t\bu],P_{u,0}^\perp[\grad G]\right) \nonumber\\
			&\le \gamma\tau_1\double{P_{u,0}^\perp[\partial_t\bu]}\double{P_{u,0}^\perp[\grad G]}. \nonumber
		\end{align}
		The stability condition \eqref{eq:interpolation_condition} implies that $\double{P_{u,0}^\perp[\grad G]}\lesssim \double{P_{u,0}[\grad G]}+\double{P_u^\perp[\grad G]}$, then
		\begin{align}
			\gamma\tau_1(\partial_t\bu,P_{u,0}[\grad G])\le&\,\,\gamma\tau_1 \double{P_{u,0}^\perp[\partial_t\bu]}\double{P_{u,0}[\grad G]}+\gamma\tau_1 \double{P_{u,0}^\perp[\partial_t\bu]}\double{P_{u}^\perp[\grad G]} \nonumber\\
			=&\,\, \frac{\tau_1\gamma}{\xi}\double{P_{u,0}^\perp[\partial_t\bu]}^2+\tau_1\gamma\xi\double{P_{u,0}[\partial_t\tilde\bu_2-\grad G]}^2 \nonumber\\
			&+\frac{\tau_1\gamma}{2}\double{P_{u,0}^\perp[\partial_t\bu]}^2+\frac{\tau_1\gamma}{2}\double{P_{u}^\perp[\grad G]}^2. \label{iq:1}
		\end{align}
		
		On the other hand, differentiating the continuity equation of the stabilised method with respect to time, we can write
		\begin{align}
			(\partial_t\div\bu_h, G)=\left(\partial_t\tilde\bu_2,P_u^\perp[\grad G]\right). \label{eq:2}
		\end{align}
		From these two inequalities \eqref{iq:1} and \eqref{eq:2} we obtain the following
		\begin{align}
			R_{15}(t)=&\,\, -\gamma\tau_1(\partial_t(\bu-\bu_h),P_{u,0}[\grad G]) \nonumber\\
			=&\,\, -\gamma\tau_1(P_{u,0}^\perp[\partial_t \bu],P_{u,0}^\perp[\grad G])-\gamma\tau_1(\partial_t\tilde\bu_2,P_u^\perp[\grad G]) \nonumber\\
			\lesssim&\,\,\frac{\gamma\tau_1}{\xi}\double{P_{u,0}^\perp[\partial_t\bu]}^2+\gamma\tau_1\xi\double{P_{u,0}[\partial_t\tilde\bu_2-\grad G]}^2\nonumber\\
			&\,\, +\frac{\gamma\tau_1}{2}\double{P_u^\perp[\partial_t\bu]}^2+\frac{\gamma\tau_1}{2}\double{P_u^\perp[\grad G]}^2-\gamma\tau_1(\partial_t\tilde\bu_2,P_u^\perp[\grad G]).
		\end{align}
		Now to have control in the last two terms we recover $S(t)$  from $R_3(t)$. In fact,
		\begin{align}
			\frac{\gamma\tau_1}{2}\double{P_u^\perp[\grad G]}^2-&\gamma\tau_1(\partial_t\tilde\bu_2,P_u^\perp[\grad G])+S(t)\nonumber\\ &=\frac{\gamma\tau_1}{2}\left(\double{P_u^\perp[\grad G]}^2-2(\partial_t\tilde\bu_2,P_u^\perp[\grad G])+\double{\partial_t\tilde\bu_2}^2\right) \nonumber\\
			&=\frac{\gamma\tau_1}{2}\double{\partial_t\tilde\bu_2-P_{u}^\perp[\grad G]}^2.\label{eq:semidiscrete_anisotropy}
		\end{align}
		Then, choosing $\gamma$ sufficiently small, we conclude that this term can be absorbed by the LHS with $Q(t)$. 
		
		It only remains to deal with the convective term $\mathcal{N}(t)$. The first two components of
		$\mathcal{N}(t)$ can be written as follows:
		\begin{align}
			& \overline{c}(\bu,\bu,\bv_h)-\overline{c}(\hat\bu_h,\bu_h,E) 
			= \overline{c}(\bu-\hat\bu_h,\bu,E)-\overline{c}(\bu-\hat\bu_h,\Lambda,E)+\overline{c}(\bu,\Lambda,E).
		\end{align}
		The three terms on the right-hand side above can be easily bounded so that they fit the structure \eqref{eq:desigualdadobjetivo}.
		
		Finally, the remaining two parts of the convective term can be written as:
		\begin{align}
			& \gamma\tau_1\left\{\overline{c}(\bu,\bu,P_{u,0}[\grad G])-\overline{c}(\hat\bu_h,\bu_h,P_{u,0}[\grad G])\right\}\nonumber\\
			&\hspace{0.5cm}= \gamma\tau_1\left\{\overline{c}(\bu-\hat \bu_h,\bu,P_{u,0}[\grad G])-\overline{c}(\bu-\hat\bu_h,\bu-\bu_h,P_{u,0}[\grad G])
			+\overline{c}(\bu,\bu-\bu_h,P_{u,0}[\grad G])\right\}.
		\end{align}
		Each term in the right-hand side above can be bounded following the same strategy as used previously. 
		
		This concludes the proof of the third step.
		
		\textbf{Step 4.} \textit{Fixed-point theorem. }According to step 3, the ball $\LB_h$ is invariant under the map $\delta_h$, that is, $\delta_h(\LB_h)\subset\LB_h$. Therefore, by applying {Brouwer's} fixed-point theorem, we conclude that there exists $\BU_h=(\bu_h,p_h,\tilde\bu_1,\tilde\bu_2)\in \LB_h$ such that $\delta_h(\BU_h)=\BU_h$ and hence is a solution of \eqref{eq:vms_momentum}-\eqref{eq:vms_pressure_gradient_subscale}. From the definition of the ball $\LB_h$, we also obtain the error estimates \eqref{Error-Bound-1} and \eqref{Error-Bound-2}.
	\end{proof}

	\begin{remark}\label{rem:dynamic_vs_quasistatic}
		The dynamic pressure-related subscale is a structural ingredient of the analysis. It provides the time-derivative contribution needed to control the projected pressure-gradient error through
		$$\partial_t\tilde\bu_2-P_u^\perp[\grad G].$$
		For quasi-static subscales, the term $\partial_t\tilde\bu_2$ is absent, and this mechanism cannot absorb the same contribution. In that case, one typically needs an additional large-time assumption; see \cite[Theorem 6.2]{BarrenecheaCastillo2019}. Thus, the dynamic pressure-gradient subscale is not used only to improve transient accuracy; it is required to close the semi-discrete stability mechanism without imposing an additional restriction of this type.
	\end{remark}

	\section{Weak-in-time control of the pressure-convection balance}
	\label{sec:pressure_convection_estimate}
	
	The stability norm used above controls the pressure-gradient contribution through the dynamic subscale equation. The next result makes this control explicit at the level of the discrete momentum balance. It is not an error estimate, and therefore it does not affect the optimal-order bounds obtained in Theorem~\ref{thm:nonlinear_existence_error}; rather, it identifies an additional weak-in-time stability property of the method and clarifies what the non-residual separation achieves at the level of the unresolved momentum balance. More precisely, we prove a bound for
	\[
	\grad p_h+\mathcal N(\bu_h,\bu_h),
	\]
	which is the pressure-convection balance entering the unresolved momentum equation. While this bound is relatively easy to obtain for the linearised problem, to our knowledge it is much scarcer for non-linear problems. Related bounds are available for Newtonian stabilised formulations; see, for example, \cite[Theorem~4.3]{Badia2010}. The estimate below extends this type of weak-in-time control to the present non-linear generalised Newtonian setting under the assumptions used in the preceding analysis. The trade-off of obtaining such a bound in the non-linear case is that the bound is proved in a rather weak norm. Specifically, this is the norm of the dual of $H^1_0(0,T;L^2(\Omega)^d)$, which we denote by $H^{-1}(0,T;L^2(\Omega)^d)$.

	Let $T>0$ be fixed. We introduce the time-independent parameter
	\begin{equation}
		\tau_0^{-1}
		=
		c_1 \frac{\mu_0}{h^2}
		+
		c_2 
		\frac{\sup_{t\in[0,T]}\double{\bu_h(t)}_{\infty}}{h}.
		\label{eq:tau0_definition}
	\end{equation}
	For fixed $h>0$, this parameter is well defined. Indeed, the inverse inequality and the stability estimate imply that
	\[
	\sup_{t\in[0,T]}\double{\bu_h(t)}_{\infty}<\infty .
	\]
	Moreover, the definition of $\tau_0$ gives the scaling relations
	\begin{equation}
		\tau_0^{1/2}\frac{\mu_0}{h}
		\lesssim
		\mu_0^{1/2},
		\qquad
		\tau_0^{1/2}
		\frac{\double{\bu_h}_{\infty}}{h}
		\lesssim
		\tau_1^{-1/2},
		\qquad
		h^{-1}(\tau_1\tau_2)^{1/2}
		\lesssim 1.
		\label{eq:tau0_scalings}
	\end{equation}
	Here $\tau_1$ and $\tau_2$ are understood as the stabilisation parameters evaluated at the non-linear solution $\bu_h$.

	\begin{theorem}[Weak-in-time pressure-convection estimate]
		\label{thm:pressure_convection_estimate}
		Let $\Omega\subset\mathbb R^d$, $d=2,3$, and let
		$(\bu_h,p_h,\tilde\bu_1,\tilde\bu_2)$ be a solution of the non-linear VMS problem
		\eqref{eq:vms_momentum}-\eqref{eq:vms_pressure_gradient_subscale}. If $\bff\in L^2(0,T;H^{-1}(\Omega)^d)$, then
		\begin{equation}
			\tau_0^{1/2}
			\double{
				\grad p_h+\mathcal N(\bu_h,\bu_h)
			}_{H^{-1}(0,T;L^2(\Omega)^d)}
			\le C,
			\label{eq:pressure_convection_bound}
		\end{equation}
		where $C$ is independent of $h$.
	\end{theorem}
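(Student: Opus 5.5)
We argue by duality in time. Take a test field $\bw\in H^1_0(0,T;L^2(\Omega)^d)$ with $\double{\bw}_{H^1_0(0,T;L^2)}\le1$. We then bound
\[
\int_0^T\bigl(\grad p_h+\mathcal N(\bu_h,\bu_h),\bw\bigr)\,\mathrm dt
\]
by a constant times $\tau_0^{-1/2}$. First we split $\bw=P_{u,0}[\bw]+P_{u,0}^\perp[\bw]$. Because $\tau_0$ is constant in time, it commutes with every time integral and with every integration by parts in $t$. This is the reason $\tau_0$ is used in place of $\tau_1$.

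\textbf{Unresolved part.} Write $\bz:=\grad p_h+\mathcal N(\bu_h,\bu_h)$. By the pointwise subscale equations \eqref{eq:convective_subscale_pointwise}--\eqref{eq:pressure_gradient_subscale_pointwise} (interpreted through $\widetilde P_1,\widetilde P_2$; if the projections are not identical we keep $P_u^\perp$ and use \eqref{eq:interpolation_condition}), we have $P_u^\perp[\bz]=-\partial_t(\tilde\bu_1+\tilde\bu_2)-\tau_1^{-1}(\tilde\bu_1+\tilde\bu_2)$. Against $P_u^\perp[\bw]$ we integrate the time-derivative term by parts. This is legitimate because $\bw(0)=\bw(T)=0$, and it leaves $\int_0^T(\tilde\bu_1+\tilde\bu_2,\partial_t\bw)\,\mathrm dt$, which is bounded by $\sup_t\double{\tilde\bu_i}\,T^{1/2}$. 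The dissipative term is bounded by $\bigl(\int\tau_1^{-1}\double{\tilde\bu_i}^2\bigr)^{1/2}\bigl(\int\tau_1^{-1}\double{\bw}^2\bigr)^{1/2}$. Multiplying by $\tau_0^{1/2}$ and using $\tau_0\le\tau_1$ (pointwise in time, since $\mu_c\le\mu_0$ and $\double{\bu_h(t)}_\infty\le\sup_t\double{\bu_h}_\infty$) yields $\tau_0^{1/2}\tau_1^{-1/2}\le1$. The first term is handled with $\tau_0^{1/2}\lesssim h/\mu_0^{1/2}$, which is bounded. The subscale energies are controlled uniformly in $h$, since the non-linear solution satisfies the energy estimate obtained by testing with the solution itself. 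That estimate is the same as in the existence theorem, because $\overline c(\bu_h,\bu_h,\bu_h)=0$ and $\mu\ge\mu_\infty$.

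\textbf{Resolved part.} We test \eqref{eq:vms_momentum} with $\bv_h=P_{u,0}[\bw]$ at each time, noting that $(\grad p_h,\bv_h)=-(p_h,\div\bv_h)$. This expresses $(\bz,\bv_h)$ as $\langle\bff,\bv_h\rangle$ minus the following terms:
\begin{itemize}
\item $(\partial_t\bu_h,\bv_h)$, which we integrate by parts in time; it gives $\sup_t\double{\bu_h}\,\double{\partial_t\bw}_{L^2(L^2)}$;
\item the viscous term, bounded by $\mu_0\double{\grad^s\bu_h}\,c_{\rm inv}h^{-1}\double{\bw}$, which becomes $\lesssim\mu_0^{1/2}\double{\grad^s\bu_h}\double{\bw}$ after multiplication by $\tau_0^{1/2}$, by \eqref{eq:tau0_scalings};
\item the grad-div term $\tau_2(\div\bv_h,P_p^\perp\div\bu_h)$, bounded via the inverse inequality and $h^{-1}(\tau_1\tau_2)^{1/2}\lesssim1$ by $\tau_2^{1/2}\double{P_p^\perp\div\bu_h}\,\tau_1^{-1/2}\double{\bw}$, and then by $\tau_0^{1/2}\tau_1^{-1/2}\le1$;
\item the term $\overline c(\bu_h,\bv_h,\tilde\bu_1)$, bounded by $\double{\bu_h}_\infty h^{-1}\double{\bw}\double{\tilde\bu_1}$ (plus the divergence part, treated by another inverse estimate), which, using $\tau_0^{1/2}\double{\bu_h}_\infty h^{-1}\lesssim\tau_1^{-1/2}$, gives $\lesssim\tau_1^{-1/2}\double{\tilde\bu_1}\double{\bw}$ and is square-integrable in time;
\item the forcing term, bounded by $\double{\bff}_{H^{-1}}c_{\rm inv}h^{-1}\double{\bw}$, which after multiplication by $\tau_0^{1/2}$ is bounded by $\mu_0^{-1/2}\double{\bff}_{H^{-1}}\double{\bw}$.
\end{itemize}
Summing, using Cauchy--Schwarz in time and the energy bound, we get a constant independent of $h$ (depending on $T$, $\mu_0$, $\mu_\infty$ and the data). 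Because $\bw\in H^1_0$, we also have $\double{\bw}_{L^2(L^2)}\lesssim T\double{\partial_t\bw}$ by Poincaré in time.

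\textbf{Main obstacle.} The delicate step is the uniform dissipation bound on $\int_0^T\tau_1^{-1}\double{\tilde\bu_1}^2$, $\int_0^T\tau_2\double{P_p^\perp\div\bu_h}^2$ and the viscous term for the non-linear solution. This bound is needed without smallness assumptions, so we must rely only on the basic energy identity. It requires checking that testing the non-linear scheme with $(\bu_h,p_h,\tilde\bu_1,\tilde\bu_2)$ cancels the cross terms $\pm\overline c(\bu_h,\bu_h,\tilde\bu_1)$ and $\pm(\grad p_h,\tilde\bu_2)$ exactly; the structure of \eqref{eq:vms_momentum}--\eqref{eq:vms_pressure_gradient_subscale} ensures that it does. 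A second point to check is the meaning of $\mathcal N(\bu_h,\bu_h)$ tested against $P_{u,0}^\perp[\bw]$ when $\widetilde V_1$ is smaller than the range of $P_u^\perp$. There we would write $P_u^\perp\mathcal N=\widetilde P_1\mathcal N+(P_u^\perp-\widetilde P_1)\mathcal N$, and the second piece vanishes because $P_u^\perp[\mathcal N(\bu_h,\bu_h)]\in\widetilde V_1$ by the definition of $\widetilde V_1$. The analogous statement holds for $\grad p_h$ and $\widetilde V_2$.
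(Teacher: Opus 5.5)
Your argument follows the paper's route. The resolved part comes from testing \eqref{eq:vms_momentum} with $P_{u,0}[\bw]$, the unresolved part comes from the summed subscale identity, and the $\tau_0$ scalings are the same. Your handling of the details is sound: the divergence part of $\overline c$, the inequality $\tau_0\le\tau_1$ for all $t$, the energy identity for the non-linear solution, and $\widetilde P_1[\mathcal N(\bu_h,\bu_h)]=P_u^\perp[\mathcal N(\bu_h,\bu_h)]$.

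The gap is in the recombination. You split $\bw=P_{u,0}[\bw]+P_{u,0}^\perp[\bw]$, but the subscale identity is only available against $P_u^\perp[\bw]$. These are different, $P_{u,0}^\perp\neq P_u^\perp$, because $\LV_h=[\Sh\cap H^1_0(\Omega)]^d$ is a proper subspace of $\Sh^d$. What you actually have is
\[
\int_0^T(\bz,\bw)\,\mathrm{d}t=\int_0^T(\bz,P_{u,0}[\bw])\,\mathrm{d}t+\int_0^T(P_u^\perp[\bz],\bw)\,\mathrm{d}t+\int_0^T\bigl((P_u-P_{u,0})[\bz],\bw\bigr)\,\mathrm{d}t .
\]
The last term involves the component of $\bz$ in $\Sh^d\ominus\LV_h$, which is annihilated by both $P_{u,0}$ and $P_u^\perp$. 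The momentum equation is tested only with $\LV_h$, and the subscale equations see only $P_u^\perp$, so neither controls this component. Nothing in your proof bounds this term.

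This is exactly what the compatibility assumption \eqref{eq:interpolation_condition} is for, and the paper uses it at this point. It bounds $P_{u,0}[\bz]$ and $P_u^\perp[\bz]$ separately in $H^{-1}(0,T;L^2(\Omega)^d)$, then invokes $\double{\bz}\lesssim\double{P_{u,0}[\bz]+P_u^\perp[\bz]}$. Your parenthetical reference to \eqref{eq:interpolation_condition} is attached to the $\widetilde P_i$ versus $P_u^\perp$ question, which, as you show yourself, is not an issue. It is needed here instead.

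Since \eqref{eq:interpolation_condition} is a spatial inequality, one more step is needed to use it in the dual-in-time norm; the paper does not spell this out. Suppose the inequality holds on a time-independent finite-dimensional space $Z_h$ containing every $\bz(t)$, e.g. $Z_h=\grad Q_h+\operatorname{span}\{\mathcal N(\phi_i,\phi_j)\}$. It cannot hold on all of $\Sh^d$, since its right-hand side vanishes on $\Sh^d\ominus\LV_h$; this is why it is a genuine assumption and not a consequence of the scheme. Then $M\bz:=P_{u,0}[\bz]+P_u^\perp[\bz]$ is injective on $Z_h$. Set $L:=(M|_{Z_h})^{-1}\circ\Pi_M$, where $\Pi_M$ is the $L^2$-projection onto $M(Z_h)$. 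This $L$ is time-independent, satisfies $LM\bz=\bz$ on $Z_h$, and has $\double{L}\le\beta_0^{-1}$. Hence
\[
\int_0^T(\bz,\bw)\,\mathrm{d}t=\int_0^T(\bz,P_{u,0}[L^\ast\bw])\,\mathrm{d}t+\int_0^T(P_u^\perp[\bz],L^\ast\bw)\,\mathrm{d}t .
\]
Because $L^\ast$ does not depend on $t$, $L^\ast\bw\in H^1_0(0,T;L^2(\Omega)^d)$ with norm at most $\beta_0^{-1}$ times that of $\bw$. Your two bounds then apply verbatim with $\bw$ replaced by $L^\ast\bw$, and the argument coincides with the paper's.
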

	
	\begin{proof} 
		Let $\bv\in H^1_0(0,T;L^2(\Omega)^d)$ and set
		\[
		\bv_h=P_{u,0}[\bv]\quad\text{\textrm{a.e. in~ } (0,T)}.
		\]
		We first estimate the resolved component of
		$\grad p_h+\mathcal N(\bu_h,\bu_h)$. Testing the momentum equation \eqref{eq:vms_momentum} with
		$\bv_h$ gives
		\begin{align}
			&\int_0^T
			\tau_0^{1/2}
			\left(
			P_{u,0}[
			\grad p_h+\mathcal N(\bu_h,\bu_h)
			],
			\bv_h
			\right)\,\textrm{d}t
			\nonumber\\
			&=
			-\int_0^T
			\tau_0^{1/2}
			\left\{
			(\partial_t\bu_h,\bv_h)
			+
			(2\mu(|\grad^s\bu_h|)\grad^s\bu_h,\grad^s\bv_h)
			\right.
			 \label{eq:resolved_pressure_convection_identity}\\
			&\hspace{2.25cm}\left.+
			(\tau_2\div\bv_h,P_p^\perp[\div\bu_h])-
			\overline c(\bu_h,\bv_h,\tilde\bu_1)
			\right\}\,\textrm{d}t+\int_0^T
			\tau_0^{1/2}
			\left<\bff,\bv_h\right>\,\mathrm{d}t.\nonumber
		\end{align}
		
		Since $\bv\in H^1_0(0,T;L^2(\Omega)^d)$, integration by parts in time yields
		\begin{align}
			-\int_0^T
			\tau_0^{1/2}
			(\partial_t\bu_h,\bv_h)\,\textrm{d}t
			&=
			\int_0^T
			\tau_0^{1/2}
			(\bu_h,\partial_t\bv_h)\,\textrm{d}t\nonumber\\
			&\lesssim
			\tau_0^{1/2}
			\left(
			\int_0^T\double{\bu_h}^2\,\textrm{d}t
			\right)^{1/2}
			\left(
			\int_0^T\double{\partial_t\bv_h}^2\,\textrm{d}t
			\right)^{1/2}.
			\label{eq:pc_time_term}
		\end{align}
		The convective subscale term is bounded by the inverse inequality and
		\eqref{eq:tau0_scalings}:
		\begin{align}
			\int_0^T
			\tau_0^{1/2}
			\left|
			\overline c(\bu_h,\bv_h,\tilde\bu_1)
			\right|\,\textrm{d}t
			&\lesssim
			\int_0^T
			\tau_0^{1/2}
			\double{\bu_h}_{\infty}
			h^{-1}
			\double{\bv_h}
			\double{\tilde\bu_1}\,\textrm{d}t
			\nonumber\\
			&\lesssim
			\left(
			\int_0^T
			\tau_1^{-1}\double{\tilde\bu_1}^2\,\textrm{d}t
			\right)^{1/2}
			\left(
			\int_0^T
			\double{\bv_h}^2\,\textrm{d}t
			\right)^{1/2}.
			\label{eq:pc_convective_term}
		\end{align}
		For the diffusive term, using $\mu\le\mu_0$, the inverse inequality and
		\eqref{eq:tau0_scalings}, we obtain
		\begin{align}
			\int_0^T
			\tau_0^{1/2}
			\left|
			(2\mu(\bx,|\grad^s\bu_h|)\grad^s\bu_h,\grad^s\bv_h)
			\right|\,\textrm{d}t
			&\lesssim
			\int_0^T
			\tau_0^{1/2}
			\mu_0 h^{-1}
			\double{\grad^s\bu_h}
			\double{\bv_h}\,\textrm{d}t
			\nonumber\\
			&\lesssim
			\left(
			\int_0^T
			\mu_0\double{\grad^s\bu_h}^2\,\textrm{d}t
			\right)^{1/2}
			\left(
			\int_0^T
			\double{\bv_h}^2\,\textrm{d}t
			\right)^{1/2}.
			\label{eq:pc_diffusive_term}
		\end{align}
		The pressure-divergence stabilisation term is bounded as
		\begin{align}
			\int_0^T
			\tau_0^{1/2}
			\left|
			\tau_2(\div\bv_h,P_p^\perp[\div\bu_h])
			\right|\,\textrm{d}t
			&\lesssim
			\int_0^T
			h^{-1}(\tau_1\tau_2)^{1/2}
			\tau_2^{1/2}
			\double{P_p^\perp[\div\bu_h]}
			\double{\bv_h}\,\textrm{d}t
			\nonumber\\
			&\lesssim
			\left(
			\int_0^T
			\tau_2
			\double{P_p^\perp[\div\bu_h]}^2\,\textrm{d}t
			\right)^{1/2}
			\left(
			\int_0^T
			\double{\bv_h}^2\,\textrm{d}t
			\right)^{1/2}.
			\label{eq:pc_divergence_term}
		\end{align}
		For the forcing term, the inverse inequality and the first relation in \eqref{eq:tau0_scalings} give
		\begin{align}
			\int_0^T
			\tau_0^{1/2}
			\left|\left<\bff,\bv_h\right>\right|\,\mathrm{d}t
			&\lesssim
			\int_0^T
			\tau_0^{1/2}
			\double{\bff}_{H^{-1}}
			\double{\bv_h}_{1}
			\,\mathrm{d}t
			\nonumber\\
			&\lesssim
			\left(
			\int_0^T
			\double{\bff}_{H^{-1}}^2\,\mathrm{d}t
			\right)^{1/2}
			\left(
			\int_0^T
			\double{\bv_h}^2\,\mathrm{d}t
			\right)^{1/2}.
			\label{eq:pc_force_term}
		\end{align}
		
		Combining
		\eqref{eq:pc_time_term}-\eqref{eq:pc_force_term}, using the stability estimate and the fact that $\|\bv_h\|\le \|\bv\|$ and $\|\partial_t\bv_h\|\le \|\partial_t\bv\|$ we obtain
		\begin{equation}
			\tau_0^{1/2}
			\double{
				P_{u,0}[
				\grad p_h+\mathcal N(\bu_h,\bu_h)
				]
			}_{H^{-1}(0,T;L^2(\Omega)^d)}
			\le C.
			\label{eq:pc_resolved_bound}
		\end{equation}

		It remains to estimate the unresolved component. Adding the two dynamic
		velocity subscale equations give
		\begin{equation}
			P_u^\perp[
			\grad p_h+\mathcal N(\bu_h,\bu_h)
			]
			=
			-
			\partial_t(\tilde\bu_1+\tilde\bu_2)
			-
			\tau_1^{-1}(\tilde\bu_1+\tilde\bu_2).
			\label{eq:unresolved_pressure_convection_identity}
		\end{equation}
		Testing this identity with $\bv\in H^1_0(0,T;L^2(\Omega)^d)$ and integrating in time yields
		\begin{align}
			&\int_0^T
			\tau_0^{1/2}
			\left(
			P_u^\perp[
			\grad p_h+\mathcal N(\bu_h,\bu_h)
			],
			\bv
			\right)\,\textrm{d}t
			\nonumber\\
			&=
			-\int_0^T
			\tau_0^{1/2}
			\left(
			\partial_t(\tilde\bu_1+\tilde\bu_2),
			\bv
			\right)\,\textrm{d}t
			-
			\int_0^T
			\tau_0^{1/2}
			\left(
			\tau_1^{-1}(\tilde\bu_1+\tilde\bu_2),
			\bv
			\right)\,\textrm{d}t .
			\label{eq:unresolved_pc_testing}
		\end{align}
		The first term on the right-hand side is integrated by parts in time:
		\begin{align}
			-\int_0^T
			\tau_0^{1/2}
			(
			\partial_t(\tilde\bu_1+\tilde\bu_2),
			\bv
			)\,\textrm{d}t
			&=
			\int_0^T
			\tau_0^{1/2}
			(
			\tilde\bu_1+\tilde\bu_2,
			\partial_t\bv
			)\,\textrm{d}t
			\nonumber\\
			&\lesssim
			\tau_0^{1/2}
			\left(
			\int_0^T
			\double{\tilde\bu_1+\tilde\bu_2}^2\,\textrm{d}t
			\right)^{1/2}
			\left(
			\int_0^T
			\double{\partial_t\bv}^2\,\textrm{d}t
			\right)^{1/2}.
			\label{eq:unresolved_pc_time}
		\end{align}
		For the second term, the definition of $\tau_0$ gives, up to constants independent of $h$,
		\[
		\tau_0^{1/2}\tau_1^{-1}
		\lesssim
		\tau_1^{-1/2}.
		\]
		Hence
		\begin{align}
			\int_0^T
			\tau_0^{1/2}
			\left|
			(
			\tau_1^{-1}(\tilde\bu_1+\tilde\bu_2),
			\bv
			)
			\right|\,\textrm{d}t
			&\lesssim
			\left(
			\int_0^T
			\tau_1^{-1}
			\double{\tilde\bu_1+\tilde\bu_2}^2\,\textrm{d}t
			\right)^{1/2}
			\left(
			\int_0^T
			\double{\bv}^2\,\textrm{d}t
			\right)^{1/2}.
			\label{eq:unresolved_pc_tau}
		\end{align}
		Using again the stability estimate, \eqref{eq:unresolved_pc_time} and
		\eqref{eq:unresolved_pc_tau} imply
		\begin{equation}
			\tau_0^{1/2}
			\double{
				P_u^\perp[
				\grad p_h+\mathcal N(\bu_h,\bu_h)
				]
			}_{H^{-1}(0,T;L^2(\Omega)^d)}
			\le C.
			\label{eq:pc_unresolved_bound}
		\end{equation}
		
		Finally, using compatibility assumption \eqref{eq:interpolation_condition} we get
		\[
		\|\grad p_h+\mathcal N(\bu_h,\bu_h)\|
		\lesssim 
		\|P_{u,0}[
		\grad p_h+\mathcal N(\bu_h,\bu_h)
		]\|
		+
		\|P_u^\perp[
		\grad p_h+\mathcal N(\bu_h,\bu_h)
		]\|,
		\]
		and combining \eqref{eq:pc_resolved_bound} with
		\eqref{eq:pc_unresolved_bound}, we obtain
		\eqref{eq:pressure_convection_bound}.
	\end{proof}
	
	\begin{remark}\label{rem:pressure_convection_meaning}
		Theorem~\ref{thm:pressure_convection_estimate} should be read as an additional stability result. Its role is to show that the non-residual VMS formulation controls the discrete pressure-convection balance in a weak-in-time sense. The resolved part of this balance is controlled through the momentum equation and the stability estimate, whereas the unresolved part is controlled directly by the dynamic subscale equations, which is consistent with the term-by-term structure of the formulation: the method not only stabilises the velocity and pressure variables, but also provides a controlled representation of the unresolved momentum balance.
	\end{remark}
	
	\section{Conclusions}\label{sec:conclusions}
	
	We have analysed a non-residual variational multiscale finite element formulation with dynamic subscales for incompressible generalised Newtonian Navier--Stokes flows. The analysis covers bounded and Lipschitz-continuous apparent viscosities, including several regularised rheological laws of physical interest, while preserving a framework suitable for finite element stability and error estimates.
	
	For the linearised semi-discrete problem, we proved well-posedness and unconditional stability in an anisotropic VMS norm, with no restrictions on the length of the time interval $(0,T)$. 
	For the non-linear formulation, a fixed-point argument yielded the existence of a discrete solution and optimal-order a priori estimates under suitable regularity and smallness assumptions. The estimates also identify where the dynamic nature of the subscales enters the proof, namely, in controlling the projected pressure-gradient contribution. An additional weak-in-time estimate was also obtained for the discrete pressure-convection balance.
	
	The dynamic subscales play a structural role in the analysis. In particular, the pressure-related dynamic subscale provides the control needed to handle the projected pressure-gradient error, avoiding the large-time restriction associated with quasi-static subscale formulations. The fully discrete analysis under anisotropic time-space discretisations remains a natural continuation of the present semi-discrete theory. A promising starting point in this direction is provided by the 
	implicit-explicit strategies recently analysed in \cite{BarrenecheaCastilloPacheco2024} 
	for variable-viscosity Navier--Stokes problems, which decouple the velocity components 
	by treating the coupling part of the viscous term explicitly while preserving 
	unconditional stability; combining such strategies with the present dynamic 
	non-residual VMS formulation could lead to a fully discrete, computationally 
	efficient scheme.
	
	\newpage
	\section*{Funding}
	
	The second and fourth authors acknowledge the support given by the Agencia Nacional de Investigación y Desarrollo (ANID) through project FONDECYT 1250287. The third author gratefully acknowledges the support received from the ICREA Acadèmia Programme of the Catalan Government. The second, fourth and fifth authors are grateful for the support of the Dirección de Investigación y Desarrollo (DICYT) under project DICYT Asociativo, code 042632GM\_DAS.

\bibliographystyle{abbrv}
\bibliography{References}

@article{Escobar2026,
    author = {Escobar, D. and Pacheco, D. R. Q. and Aguirre, A. and Castillo, E.},
    title = {A term-by-term variational multiscale method with dynamic subscales for incompressible turbulent aerodynamics},
    journal = {Physics of Fluids},
    volume = {38},
    number = {4},
    pages = {045148},
    year = {2026},
    month = {04},
    issn = {1070-6631},
    doi = {10.1063/5.0324342},
    url = {https://doi.org/10.1063/5.0324342},
    eprint = {https://pubs.aip.org/aip/pof/article-pdf/doi/10.1063/5.0324342/20982630/045148_1_5.0324342.pdf},
}

@article {GuerreroCastillo2025,
    AUTHOR = {Guerrero, F. and Castillo, E. and Galarce, F. and Pacheco, D.
              R. Q.},
     TITLE = {Spatially and temporally high-order dynamic nonlinear
              variational multiscale methods for generalized {N}ewtonian
              flows},
   JOURNAL = {Commun. Nonlinear Sci. Numer. Simul.},
  FJOURNAL = {Communications in Nonlinear Science and Numerical Simulation},
    VOLUME = {140},
      YEAR = {2025},
     PAGES = {Paper No. 108368, 21},
      ISSN = {1007-5704,1878-7274},
   MRCLASS = {65M60 (76M10)},
  MRNUMBER = {4801461},
       DOI = {10.1016/j.cnsns.2024.108368}
}

@article{FehnKronbichlerLube2025,
  author  = {Fehn, Niklas and Kronbichler, Martin and Lube, Gert},
  title   = {From Anomalous Dissipation Through {E}uler Singularities to Stabilized Finite Element Methods for Turbulent Flows},
  journal = {Flow Turbul. Combust.},
  year    = {2025},
  volume  = {115},
  number  = {1},
  pages   = {347--388},
  doi     = {10.1007/s10494-025-00639-6},
  publisher = {Springer},
}

@article {BarrenecheaCastilloPacheco2024,
    AUTHOR = {Barrenechea, Gabriel and Castillo, Ernesto and Pacheco,
              Douglas},
     TITLE = {Implicit-explicit schemes for incompressible flow problems
              with variable viscosity},
   JOURNAL = {SIAM J. Sci. Comput.},
  FJOURNAL = {SIAM Journal on Scientific Computing},
    VOLUME = {46},
      YEAR = {2024},
    NUMBER = {4},
     PAGES = {A2660--A2682}
}

@article {BarrenecheaSuli2023,
    AUTHOR = {Barrenechea, Gabriel R. and S\"uli, Endre},
     TITLE = {Analysis of a stabilised finite element method for power-law
              fluids},
   JOURNAL = {Constr. Approx.},
  FJOURNAL = {Constructive Approximation. An International Journal for
              Approximations and Expansions},
    VOLUME = {57},
      YEAR = {2023},
    NUMBER = {2},
     PAGES = {295--325},
       DOI = {10.1007/s00365-022-09591-4}
}

@article {PascalSuli2022,
    AUTHOR = {Heid, Pascal and S\"uli, Endre},
     TITLE = {On the convergence rate of the {K}a\v canov scheme for
              shear-thinning fluids},
   JOURNAL = {Calcolo},
  FJOURNAL = {Calcolo. A Quarterly on Numerical Analysis and Theory of
              Computation},
    VOLUME = {59},
      YEAR = {2022},
    NUMBER = {1},
     PAGES = {Paper No. 4, 27},
      ISSN = {0008-0624,1126-5434},
   MRCLASS = {35Q35 (76A05)},
  MRNUMBER = {4345847},
       DOI = {10.1007/s10092-021-00444-3}
}

@article {BerselliRusicka2021,
    AUTHOR = {Berselli, Luigi C. and R{\r u}\v{z}i\v{c}ka, Michael},
     TITLE = {Optimal error estimate for a space-time discretization for
              incompressible generalized {N}ewtonian fluids: the {D}irichlet
              problem},
   JOURNAL = {Partial Differ. Equ. Appl.},
  FJOURNAL = {Partial Differential Equations and Applications},
    VOLUME = {2},
      YEAR = {2021},
    NUMBER = {4},
     PAGES = {Paper No. 59, 23},
}

@article {Diening2021,
    AUTHOR = {Diening, Lars and Storn, Johannes and Tscherpel, Tabea},
     TITLE = {On the {S}obolev and {$L^p$}-stability of the
              {$L^2$}-projection},
   JOURNAL = {SIAM J. Numer. Anal.},
  FJOURNAL = {SIAM Journal on Numerical Analysis},
    VOLUME = {59},
      YEAR = {2021},
    NUMBER = {5},
     PAGES = {2571--2607},
      ISSN = {0036-1429,1095-7170},
   MRCLASS = {65N30 (46E35 65N12 65N50)},
  MRNUMBER = {4320894},
MRREVIEWER = {Nicolae\ Pop},
       DOI = {10.1137/20M1358013},
       URL = {https://doi.org/10.1137/20M1358013},
}

@article {Farrell2020,
    AUTHOR = {Farrell, P. E. and Gazca-Orozco, P. A. and S\"uli, E.},
     TITLE = {Numerical analysis of unsteady implicitly constituted
              incompressible fluids: 3-field formulation},
   JOURNAL = {SIAM J. Numer. Anal.},
  FJOURNAL = {SIAM J. Numer. Anal.},
    VOLUME = {58},
      YEAR = {2020},
    NUMBER = {1},
     PAGES = {757--787},
      ISSN = {0036-1429,1095-7170},
   MRCLASS = {65M60 (35Q35 65M12 76A05)},
  MRNUMBER = {4066569},
MRREVIEWER = {Mar\'ia\ Gonz\'alez Taboada},
       DOI = {10.1137/19M125738X},
       URL = {https://doi-org.ezproxy.usach.cl/10.1137/19M125738X}
}

@article {CastilloCodina2019,
    AUTHOR = {Castillo, E. and Codina, R.},
     TITLE = {Dynamic term-by-term stabilized finite element formulation
              using orthogonal subgrid-scales for the incompressible
              {N}avier-{S}tokes problem},
   JOURNAL = {Comput. Methods Appl. Mech. Engrg.},
  FJOURNAL = {Comput. Methods Appl. Mech. Engrg.},
    VOLUME = {349},
      YEAR = {2019},
     PAGES = {701--721},
}

@article{BarrenecheaCastillo2019,
    author = {Barrenechea, Gabriel R and Castillo, Ernesto and Codina, Ramon},
    title = {Time-dependent semidiscrete analysis of the viscoelastic fluid flow problem using a variational multiscale stabilized formulation},
    journal = {IMA J. Numer. Anal.},
    volume = {39},
    number = {2},
    pages = {792-819},
    year = {2019},
    month = {04},
    issn = {0272-4979},
    doi = {10.1093/imanum/dry018},
    url = {https://doi.org/10.1093/imanum/dry018},
    eprint = {https://academic.oup.com/imajna/article-pdf/39/2/792/28378226/dry018.pdf},
}

@article{SuliTabea2019,
    AUTHOR = {S\"uli, Endre and Tscherpel, Tabea},
     TITLE = {Fully discrete finite element approximation of unsteady flows
              of implicitly constituted incompressible fluids},
   JOURNAL = {IMA J. Numer. Anal.},
  FJOURNAL = {IMA Journal of Numerical Analysis},
    VOLUME = {40},
      YEAR = {2020},
    NUMBER = {2},
     PAGES = {801--849},
      ISSN = {0272-4979,1464-3642},
   MRCLASS = {65M60 (65M12 76A05 76Dxx)},
  MRNUMBER = {4092271},
MRREVIEWER = {Hans-Joachim\ Bungartz},
       DOI = {10.1093/imanum/dry097},
       URL = {https://doi-org.ezproxy.usach.cl/10.1093/imanum/dry097},
}

@article {Codina2018,
    AUTHOR = {Codina, Ramon},
     TITLE = {On {$hp$} convergence of stabilized finite element methods for
              the convection-diffusion equation},
   JOURNAL = {SeMA J.},
  FJOURNAL = {SeMA Journal. Boletin de la Sociedad Espa\~nola de
              Matem\'atica Aplicada},
    VOLUME = {75},
      YEAR = {2018},
    NUMBER = {4},
     PAGES = {591--606},
      ISSN = {2254-3902,2281-7875},
   MRCLASS = {65N30 (65N12)},
  MRNUMBER = {3875916},
       DOI = {10.1007/s40324-018-0154-4},
       URL = {https://doi-org.ezproxy.usach.cl/10.1007/s40324-018-0154-4}
}

@article{Castillo2017,
  author    = {Castillo, Ernesto and Codina, Ramon},
  title     = {Numerical analysis of a stabilized finite element
               approximation for the three-field linearized viscoelastic
               fluid problem using arbitrary interpolations},
  journal   = {ESAIM: Mathematical Modelling and Numerical Analysis},
  volume    = {51},
  number    = {4},
  pages     = {1407--1427},
  year      = {2017},
  doi       = {10.1051/m2an/2016068}
}

@article {Castillo2014,
    AUTHOR = {Castillo, Ernesto and Codina, Ramon},
     TITLE = {Variational multi-scale stabilized formulations for the
              stationary three-field incompressible viscoelastic flow
              problem},
   JOURNAL = {Comput. Methods Appl. Mech. Engrg.},
  FJOURNAL = {Computer Methods in Applied Mechanics and Engineering},
    VOLUME = {279},
      YEAR = {2014},
     PAGES = {579--605},
      ISSN = {0045-7825,1879-2138},
   MRCLASS = {65N30 (76A10)},
  MRNUMBER = {3253483},
       DOI = {10.1016/j.cma.2014.07.006},
       URL = {https://doi.org/10.1016/j.cma.2014.07.006},
}

@article{DieningKreuzerSuli2013,
  author  = {Diening, Lars and Kreuzer, Christian and S\"uli, Endre},
  title   = {Finite Element Approximation of Steady Flows of Incompressible Fluids with Implicit Power-Law-Like Rheology},
  journal = {SIAM J. Numer. Anal.},
  volume  = {51},
  number  = {2},
  pages   = {984--1015},
  year    = {2013},
  doi     = {10.1137/120873133},
}

@article {GuaschCodina2013,
    AUTHOR = {Guasch, Oriol and Codina, Ramon},
     TITLE = {Statistical behavior of the orthogonal subgrid scale
              stabilization terms in the finite element large eddy
              simulation of turbulent flows},
   JOURNAL = {Comput. Methods Appl. Mech. Engrg.},
  FJOURNAL = {Comput. Methods Appl. Mech. Engrg.},
    VOLUME = {261/262},
      YEAR = {2013},
     PAGES = {154--166},
      ISSN = {0045-7825,1879-2138},
   MRCLASS = {76F65 (65M60 76M10)},
  MRNUMBER = {3069874},
       DOI = {10.1016/j.cma.2013.04.006},
       URL = {https://doi-org.ezproxy.usach.cl/10.1016/j.cma.2013.04.006}
}

@article {Badia2010,
    AUTHOR = {Badia, Santiago and Codina, Ramon and Guti\'errez-Santacreu,
              Juan Vicente},
     TITLE = {Long-term stability estimates and existence of a global
              attractor in a finite element approximation of the
              {N}avier-{S}tokes equations with numerical subgrid scale
              modeling},
   JOURNAL = {SIAM J. Numer. Anal.},
  FJOURNAL = {SIAM J. Numer. Anal.},
    VOLUME = {48},
      YEAR = {2010},
    NUMBER = {3},
     PAGES = {1013--1037},
      ISSN = {0036-1429,1095-7170},
   MRCLASS = {65N30 (35B35 35B41 35Q30 76D05 76M10)},
  MRNUMBER = {2669399},
MRREVIEWER = {Aziz\ Belmiloudi},
       DOI = {10.1137/090766681},
       URL = {https://doi-org.ezproxy.usach.cl/10.1137/090766681},
}

@article {BadiaCodina2009,
    AUTHOR = {Badia, Santiago and Codina, Ramon},
     TITLE = {On a multiscale approach to the transient {S}tokes problem:
              dynamic subscales and anisotropic space-time discretization},
   JOURNAL = {Appl. Math. Comput.},
  FJOURNAL = {Applied Mathematics and Computation},
    VOLUME = {207},
      YEAR = {2009},
    NUMBER = {2},
     PAGES = {415--433},
      ISSN = {0096-3003,1873-5649},
   MRCLASS = {35Q35 (65M99 76D07 76M10)},
  MRNUMBER = {2489114},
       DOI = {10.1016/j.amc.2008.10.059}
}

@article {Codina2008,
    AUTHOR = {Codina, Ramon},
     TITLE = {Analysis of a stabilized finite element approximation of the
              {O}seen equations using orthogonal subscales},
   JOURNAL = {Appl. Numer. Math.},
  FJOURNAL = {Applied Numerical Mathematics. An IMACS Journal},
    VOLUME = {58},
      YEAR = {2008},
    NUMBER = {3},
     PAGES = {264--283},
      ISSN = {0168-9274,1873-5460},
   MRCLASS = {76M10 (65N30 76D05)},
  MRNUMBER = {2391446},
MRREVIEWER = {Georgios\ C.\ Georgiou},
       DOI = {10.1016/j.apnum.2006.11.011},
       URL = {https://doi-org.ezproxy.usach.cl/10.1016/j.apnum.2006.11.011},
}

@article{ErvinMiles2003,
    author = {Ervin, V. and Miles, W. },
    title = {Approximation of time-dependent viscoelastic fluid flow: {SUPG} approximation},
    journal = {SIAM J. Numer. Anal.},
    year = {2003}
}

@article{Hughes1998,
title = {The variational multiscale method—a paradigm for computational mechanics},
journal = {Comput. Methods Appl. Mech. Engrg.},
volume = {166},
number = {1},
pages = {3-24},
year = {1998},
note = {Advances in Stabilized Methods in Computational Mechanics},
issn = {0045-7825},
doi = {https://doi.org/10.1016/S0045-7825(98)00079-6},
url = {https://www.sciencedirect.com/science/article/pii/S0045782598000796},
author = {Thomas J.R. Hughes and Gonzalo R. Feijóo and Luca Mazzei and Jean-Baptiste Quincy}
}

@article {Codina1997,
    AUTHOR = {Codina, Ramon and Blasco, Jordi},
     TITLE = {A finite element formulation for the {S}tokes problem allowing
              equal velocity-pressure interpolation},
   JOURNAL = {Comput. Methods Appl. Mech. Engrg.},
  FJOURNAL = {Comput. Methods Appl. Mech. Engrg.},
    VOLUME = {143},
      YEAR = {1997},
    NUMBER = {3-4},
     PAGES = {373--391},
      ISSN = {0045-7825,1879-2138},
   MRCLASS = {76M10 (65M60 76D07)},
  MRNUMBER = {1445157},
MRREVIEWER = {Ad\'elia\ Sequeira},
       DOI = {10.1016/S0045-7825(96)01154-1},
       URL = {https://doi-org.ezproxy.usach.cl/10.1016/S0045-7825(96)01154-1}
}

@article{Hughes1995,
title = {Multiscale phenomena: Green's functions, the {D}irichlet-to-{N}eumann formulation, subgrid scale models, bubbles and the origins of stabilized methods},
journal = {Comput. Methods Appl. Mech. Engrg.},
volume = {127},
number = {1},
pages = {387-401},
year = {1995},
issn = {0045-7825},
doi = {https://doi.org/10.1016/0045-7825(95)00844-9},
url = {https://www.sciencedirect.com/science/article/pii/0045782595008449},
author = {Thomas J.R. Hughes}
}

@article {Crouzeix1987,
    AUTHOR = {Crouzeix, M. and Thom\'ee, V.},
     TITLE = {The stability in {$L_p$} and {$W^1_p$} of the
              {$L_2$}-projection onto finite element function spaces},
   JOURNAL = {Math. Comp.},
  FJOURNAL = {Mathematics of Computation},
    VOLUME = {48},
      YEAR = {1987},
    NUMBER = {178},
     PAGES = {521--532},
      ISSN = {0025-5718,1088-6842},
   MRCLASS = {41A15 (41A35 65N10 65N30)},
  MRNUMBER = {878688},
MRREVIEWER = {Dietrich\ Braess},
       DOI = {10.2307/2007825},
       URL = {https://doi.org/10.2307/2007825},
}

@article{BrooksHughes1982,
  author  = {Brooks, A. N. and Hughes, T. J. R.},
  title   = {Streamline Upwind/{P}etrov-{G}alerkin Formulations for Convection-Dominated Flows with Particular Emphasis on the Incompressible {N}avier--{S}tokes Equations},
  journal = {Comput. Methods Appl. Mech. Engrg.},
  volume  = {32},
  pages   = {199--259},
  year    = {1982},
  doi     = {10.1016/0045-7825(82)90071-8}
}

@phdthesis{CarreauYasuda,
    author = {Yasuda, Kenji},
    title = {Investigation of the analogies between viscometric and linear viscoelastic properties of polystyrene fluids.},
    school = {Massachusetts Institute of Technology, Department of Chemical Engineering},
    year = {1979}
}

@book{Teman1984,
    author = {R. Temam},
    title = {Navier–Stokes Equations. Theory and Numerical Analysis},
    publisher = {North-Holland Publishing Co.},
    year = {1984}
}

@article{Carreau1972,
  author  = {Carreau, Pierre J.},
  title   = {Rheological Equations from Molecular Network Theories},
  journal = {Trans. Soc. Rheol.},
  volume  = {16},
  number  = {1},
  pages   = {99--127},
  year    = {1972},
  doi     = {10.1122/1.549276}
}

@article{Cross1965,
  author  = {Cross, Malcolm M.},
  title   = {Rheology of Non-Newtonian Fluids: A New Flow Equation for Pseudoplastic Systems},
  journal = {J. Colloid Sci.},
  volume  = {20},
  pages   = {417--437},
  year    = {1965},
  doi     = {10.1016/0095-8522(65)90022-X}
}

@article{Quemada1977,
  author  = {Quemada, Daniel},
  title   = {Rheology of Concentrated Disperse Systems and Minimum Energy Dissipation Principle},
  journal = {Rheol. Acta.},
  volume  = {16},
  pages   = {82--94},
  year    = {1977},
  doi     = {10.1007/BF01516932}
}

@article{Eyring1936,
  author  = {Eyring, Henry},
  title   = {Viscosity, Plasticity, and Diffusion as Examples of Absolute Reaction Rates},
  journal = {J. Chem. Phys.},
  volume  = {4},
  number  = {4},
  pages   = {283--291},
  year    = {1936},
  doi     = {10.1063/1.1749836}
}

@article{BOCHEV20041471,
title = {On inf–sup stabilized finite element methods for transient problems},
journal = {Comput. Methods Appl. Mech. Engrg.},
volume = {193},
number = {15},
pages = {1471-1489},
year = {2004},
note = {Recent Advances in Stabilized and Multiscale Finite Element Methods},
issn = {0045-7825},
doi = {https://doi.org/10.1016/j.cma.2003.12.034},
url = {https://www.sciencedirect.com/science/article/pii/S0045782504000453},
author = {Pavel B. Bochev and Max D. Gunzburger and John N. Shadid}
}

@article{BARRENECHEA2007219,
title = {Pressure stabilization of finite element approximations of time-dependent incompressible flow problems},
journal = {Comput. Methods Appl. Mech. Engrg.},
volume = {197},
number = {1},
pages = {219-231},
year = {2007},
issn = {0045-7825},
doi = {https://doi.org/10.1016/j.cma.2007.07.027},
url = {https://www.sciencedirect.com/science/article/pii/S0045782507003234},
author = {Gabriel R. Barrenechea and Jordi Blasco}
}

@book{Ciarlet-2025,
author = {Ciarlet, Philippe G.},
title = {Linear and Nonlinear Functional Analysis with Applications, Second Edition},
publisher = {Society for Industrial and Applied Mathematics},
year = {2025},
doi = {10.1137/1.9781611978247},
address = {Philadelphia, PA},
edition   = {},
URL = {https://epubs.siam.org/doi/abs/10.1137/1.9781611978247},
eprint = {https://epubs.siam.org/doi/pdf/10.1137/1.9781611978247}
}

@book{BGHRR-2024,
author = {Bernardi, Christine and Girault, Vivette and Hecht, Frédéric and Raviart, Pierre-Arnaud and Rivière, Beatrice},
title = {Mathematics and Finite Element Discretizations of Incompressible Navier—Stokes Flows},
publisher = {Society for Industrial and Applied Mathematics},
year = {2024},
doi = {10.1137/1.9781611978124},
address = {Philadelphia, PA},
edition   = {},
URL = {https://epubs.siam.org/doi/abs/10.1137/1.9781611978124},
eprint = {https://epubs.siam.org/doi/pdf/10.1137/1.9781611978124}
}

@book{EG21-I,
	author = {Ern, Alexandre and Guermond, Jean-Luc},
	publisher = {Springer},
	title = {Finite elements {I}---{A}pproximation and interpolation},
	year = {2021}}

@Article{codina9,
  author =       {R. Codina},
  title =        {Stabilized finite element approximation of
                  transient incompressible flows using orthogonal subscales},
  journal =      {Comput. Methods Appl. Mech. Engrg.},
  year =         {2002},
  volume =       {191},
  pages =        {4295--4321},
}

@Article{codinap1,
  author =       {R. Codina and J. Principe and O. Guasch and S. Badia},
  title =        {Time dependent subscales in the stabilized finite element approximation of incompressible flow problems},
  journal =      {Comput. Methods Appl. Mech. Engrg.},
  year =         {2007},
  volume =       {196},
  pages =        {2413--2430},
  OPTnote = 	 {}
}

@Article{cowenc2,
  author =       {H. Coppola-Owen and R. Codina},
  title =        {A finite element model for free surface flows on fixed meshes},
  journal =      {Int. J. Numer. Methods Fluids},
  year =         {2007},
  volume =       {54},
  pages =        {1151-1171},
}

@Article{Bulicek2023,
author={Bul{\'i}{\v{c}}ek, Miroslav and M{\'a}lek, Josef and Maringov{\'a}, Erika},
title={On Unsteady Internal Flows of Incompressible Fluids Characterized by Implicit Constitutive Equations in the Bulk and on the Boundary},
journal={Journal of Mathematical Fluid Mechanics},
year={2023},
month={Jul},
day={31},
volume={25},
number={3},
pages={72},
issn={1422-6952},
doi={10.1007/s00021-023-00803-w},
url={https://doi.org/10.1007/s00021-023-00803-w}
}

@book{GaldiRannacherRobertsonTurek2008,
  author    = {Galdi, Giovanni P. and Rannacher, Rolf and Robertson, Anne M. and Turek, Stefan},
  title     = {Hemodynamical Flows: Modeling, Analysis and Simulation},
  series    = {Oberwolfach Seminars},
  volume    = {37},
  publisher = {Birkh\"auser},
  address   = {Basel},
  year      = {2008}
}

@book{BirdArmstrongHassager1987,
  author    = {Bird, R. Byron and Armstrong, Robert C. and Hassager, Ole},
  title     = {Dynamics of Polymeric Liquids, Volume 1: Fluid Mechanics},
  edition   = {2nd},
  publisher = {Wiley-Interscience},
  address   = {New York},
  year      = {1987}
}

@book{Larson1999,
  author    = {Larson, Ronald G.},
  title     = {The Structure and Rheology of Complex Fluids},
  publisher = {Oxford University Press},
  address   = {New York},
  year      = {1999}
}
\end{document}